\documentclass[11pt]{amsart}

\usepackage{amsmath,amssymb,amsthm}
\usepackage[T1]{fontenc}
\usepackage{microtype}
\usepackage[margin=1.1in]{geometry}
\usepackage[hidelinks]{hyperref}
\hypersetup{
  pdftitle={Cyclic covers and non-orbit 3-representation-finite symmetric algebras},
  pdfauthor={Tor Kringeland},
  pdfsubject={Higher Auslander--Reiten theory and orbit algebras}
}

\theoremstyle{plain}
\newtheorem{theorem}{Theorem}
\newtheorem{proposition}[theorem]{Proposition}
\newtheorem{lemma}[theorem]{Lemma}

\theoremstyle{definition}
\newtheorem{question}[theorem]{Question}
\newtheorem{remark}[theorem]{Remark}

\DeclareMathOperator{\Ext}{Ext}
\DeclareMathOperator{\rad}{rad}
\DeclareMathOperator{\add}{add}
\DeclareMathOperator{\soc}{soc}
\DeclareMathOperator{\ind}{ind}
\DeclareMathOperator{\proj}{proj}
\DeclareMathOperator{\Der}{Der}
\DeclareMathOperator{\tr}{tr}
\DeclareMathOperator{\Obj}{Obj}
\DeclareMathOperator{\Aut}{Aut}
\DeclareMathOperator{\Hom}{Hom}
\newcommand{\rep}{\widehat{\Lambda}}
\newcommand{\Nak}{\widehat{\nu}}
\newcommand{\cV}{\mathcal{V}}
\newcommand{\ZZ}{\mathbb{Z}}

\title[Cyclic covers and non-orbit $3$-representation-finite algebras]
{Cyclic covers and non-orbit\\ $3$-representation-finite symmetric algebras}

\author{Tor Kringeland}
\address{Department of Mathematical Sciences, NTNU, 7491 Trondheim, Norway}
\date{}
\subjclass[2020]{Primary 16G10; Secondary 16D50, 16G70}

\begin{document}

\begin{abstract}
Over an algebraically closed field of characteristic zero, we exhibit two symmetric algebras that are
$3$-representation-finite but are not orbit algebras of repetitive categories. The first is a $36$-dimensional
characteristic-zero lift $A$ of $Q(3A)^2_2$, the quaternion-type algebra that Böhmler and Marczinzik proved
$3$-representation-finite in characteristic $2$; we construct an explicit $3$-cluster-tilting module. We show
that any orbit presentation of a connected symmetric algebra forces the algebra, or a connected cyclic cover
of it, to admit a half-dimensional square-zero grading. Such gradings are detected by idempotent derivations,
and a finite group of arrow characters constrains the possible covers. For $A$, only three double-cover
candidates remain and the derivation obstruction excludes all of them. Thus $A$ is not an orbit algebra of any
finite-dimensional algebra, regardless of global dimension, answering a question of Darpö and Iyama. The
$84$-dimensional $3$-spherical weighted surface algebra is likewise $3$-representation-finite and not an orbit
algebra.
\end{abstract}

\maketitle

\section{Introduction}

Higher Auslander--Reiten theory studies algebras of global dimension at most $d$ that carry a
$d$-cluster-tilting module; these are the $d$-representation-finite algebras, higher analogues of the
representation-finite hereditary algebras. Darpö and Iyama~\cite{DI} developed the self-injective side of the
theory. A module $M$ over a self-injective finite-dimensional algebra $A$ is \emph{$d$-cluster-tilting} if
\[
  \add M = \{\, X : \Ext^i_A(M,X)=0 \text{ for } 0<i<d \,\}
         = \{\, X : \Ext^i_A(X,M)=0 \text{ for } 0<i<d \,\},
\]
and $A$ is \emph{$d$-representation-finite} ($d$-RF) if such an $M$ exists. The principal source of examples
in~\cite{DI} is the orbit construction. From a finite-dimensional algebra $\Lambda$ one forms the repetitive
category $\rep$, a locally bounded self-injective $k$-category, and passes to the orbit category
$\rep/\langle\varphi\rangle$ for an automorphism $\varphi$; when $\varphi$ is admissible the orbit is a
finite-dimensional self-injective algebra, and it is $d$-RF when $\Lambda$ has finite global dimension and
$D^b(\Lambda)$ contains a locally bounded $\varphi$-equivariant $d$-cluster-tilting
subcategory~\cite[Theorem~2.3]{DI}; see \cite[\S2]{DI} for the role of $\nu_d$-finiteness in the construction
of such subcategories. Darpö and Iyama asked whether the construction is exhaustive.

\begin{question}[{\cite[Question~6.3]{DI}}]\label{q:DI}
Assume that $k$ is an algebraically closed field of characteristic $0$ (or sufficiently large). Is every
$d$-representation-finite self-injective $k$-algebra isomorphic to an orbit algebra
$\rep/\langle\varphi\rangle$ for some $\nu_d$-finite algebra $\Lambda$ of finite global dimension and some
admissible automorphism $\varphi$ of $\rep$?
\end{question}

In finite representation type the question is settled by classical results: every representation-finite
self-injective algebra over an algebraically closed field of characteristic $\neq 2$ is an orbit algebra of a
tilted algebra of Dynkin type, by the classification initiated by Riedtmann~\cite{Riedtmann}
(see~\cite[Theorem~1.1]{DI} and the references there), and the $d$-representation-finite symmetric algebras of
finite representation type are classified in~\cite{DK}. We show that in infinite representation type the orbit
construction is not exhaustive.

Throughout the paper $k$ is an algebraically closed field of characteristic $0$. All algebras, modules,
extension groups and derivations are over $k$; the integer matrices below encode character lattices and are
base-changed to $k$ whenever they are used as linear maps. Let $A$ be the $k$-algebra given by the quiver with
vertices $1,2,3$ and arrows
\[
  b\colon 1\to 2,\qquad y\colon 2\to 1,\qquad d\colon 2\to 3,\qquad n\colon 3\to 2
\]
(paths composed left to right) modulo the relations
\begin{equation}\label{eq:relations}
  byb=bdnybdn,\quad yby=dnybdny,\quad ndn=nybdnyb,\quad dnd=ybdnybd,\quad bybd=0,\quad ndny=0 .
\end{equation}
In characteristic $2$, the same quiver and relations present the algebra $Q(3A)^2_2$ of quaternion type, which
is Morita equivalent to the principal block of the group algebra of $SL(2,5)$ over a splitting field of
characteristic~$2$, and which Böhmler and Marczinzik~\cite{BM} proved to be $3$-representation-finite; $A$ is
its characteristic-$0$ lift. The algebra $A$ is basic and connected, and its quiver has no parallel arrows. It
is symmetric and $36$-dimensional; see Appendix~\ref{app:normal} for a basis and a symmetrizing form. It is
moreover a weighted surface algebra in the general sense of Erdmann and Skowroński~\cite{ErdmannSkowronskiGV},
as corrected in~\cite{ErdmannSkowronskiCorr} (the original class of~\cite{ErdmannSkowronski} does not allow
the virtual loops occurring here); this identification enters only through Proposition~\ref{prop:RF}. Its
socle is spanned by the cycles $(by)^2$, $(dn)^2$, $(nd)^2$ at the vertices $1,2,3$. The algebra $A$ is also
representation-infinite; this is proved in Remark~\ref{rem:repinf} and is not used in the proofs.

\begin{theorem}\label{thm:main}
The algebra $A$ of~\eqref{eq:relations} is $3$-representation-finite, with an explicit $3$-cluster-tilting
module $M$ (Proposition~\textup{\ref{prop:RF}}), and is not isomorphic to any orbit algebra
$\rep/\langle\varphi\rangle$ for any finite-dimensional algebra $\Lambda$ and any admissible automorphism
$\varphi$. In particular the answer to Question~\textup{\ref{q:DI}} is negative.
\end{theorem}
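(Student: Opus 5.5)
The proof has two independent halves, and I will do them in order. For $3$-representation-finiteness I will not redo the characteristic-$2$ argument of~\cite{BM}. Instead I will use the identification of $A$ as a weighted surface algebra in the sense of~\cite{ErdmannSkowronskiGV,ErdmannSkowronskiCorr}.

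\textbf{Step 1: the cluster-tilting module.} I will write down $M = A \oplus X$ for an explicit finite set of indecomposable non-projective modules $X$. These are expected to be the characteristic-$0$ analogues of the summands found in~\cite{BM}: periodic modules of small length such as the uniserials $bA$, $yA$, $dA$, $nA$ and their syzygies. I then verify the two equalities in the definition using three facts:
\begin{itemize}
\item $A$ is periodic of period $4$ as a weighted surface algebra, so $\Omega^4 \cong \mathrm{id}$ on the stable category;
\item $\Ext^i(M,X) \cong \underline{\Hom}(\Omega^i M, X)$;
\item for a symmetric algebra, $3$-cluster-tilting of $M$ is equivalent to $\add M$ being closed under $\Omega^3$ (equivalently $\tau_3 = \Omega^{3}\nu$, which equals $\Omega^3$ here) together with $\underline{\Hom}(\Omega^i M, M)=0$ for $i=1,2$.
\end{itemize}
The maximality half, that any $X$ with $\Ext^{1,2}(M,X)=0$ lies in $\add M$, is the delicate part. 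I would handle it with the standard criterion through $3$-almost split sequences: it suffices to exhibit, for each non-projective summand, a $3$-almost split sequence whose terms lie in $\add M$. This is Proposition~\ref{prop:RF}, a finite computation.

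\textbf{Step 2: reduction to a grading obstruction.} Suppose $A \cong \rep/\langle\varphi\rangle$ with $\varphi$ admissible. The repetitive category carries the $\mathbb{Z}$-grading in which $\Lambda$ sits in degree $0$ and $D\Lambda$ in degree $1$. The cyclic quotient $\langle \nu_{\rep}\rangle / (\langle\varphi\rangle\cap\langle\nu_{\rep}\rangle)$, or more precisely the subgroup $\langle\varphi,\nu_{\rep}\rangle$, yields a finite cyclic Galois cover $\tilde A \to A$ such that $\tilde A \cong \rep/\langle\psi\rangle$ with $\psi$ a power of $\nu_{\rep}$ composed with an automorphism of $\Lambda$. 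Then $\tilde A$ is a trivial-extension-like algebra $B \ltimes DB$ twisted, and it carries a grading $\tilde A = \tilde A_0 \oplus \tilde A_1$ with $\tilde A_1^2 = 0$ and $\dim \tilde A_1 = \dim \tilde A/2$. Such a grading gives an idempotent derivation $\delta$ with $\delta|_{\tilde A_0}=0$ and $\delta|_{\tilde A_1}=\mathrm{id}$. Conversely, the Galois group of the cover acts on $A$ through characters on arrows.

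\textbf{Step 3: classifying covers and killing them.}
\begin{itemize}
\item \emph{Degree of the cover.} Connected cyclic covers of $A$ correspond to gradings of $A$ by $\mathbb{Z}/m$ that are compatible with the relations~\eqref{eq:relations}. Assigning characters to $b,y,d,n$, the relations force $\chi(byb)=\chi(bdnybdn)$ and so on. This gives a small integer lattice of constraints, and I expect it to leave only $m \le 2$ with exactly three nontrivial double-cover candidates.
\item \emph{Trivial cover.} For $A$ itself, no half-dimensional square-zero grading exists. Homogeneous generators of degree $0$ or $1$ are forced, and the socle relations $byb=bdnybdn$ (mixing path lengths $3$ and $7$) constrain the degrees heavily.
\item \emph{The three double covers.} For each candidate $\tilde A$ I compute the space of derivations, or degree-$0$ derivations modulo inner ones. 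I then show no derivation is idempotent with image of dimension $\dim\tilde A/2$, for instance by showing every derivation acts nilpotently on the arrow span modulo $\rad^2$.
\end{itemize}

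The main obstacle is the last step: the derivation computation on the three $72$-dimensional covers. It has to be organized by arrow degrees to stay finite and rigorous, and I would first try to show that idempotence forces $\delta$ to be diagonal on arrows with eigenvalues in $\{0,1\}$, and then check the finitely many $0/1$ assignments against the relations.
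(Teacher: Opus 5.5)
\textbf{There are genuine gaps.} Your outline has the paper's architecture: a cyclic cover on which a power of $\varphi$ shifts levels, arrow characters to bound the covers, and idempotent derivations to exclude the survivors. But the step that makes Step~3 finite is missing.

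You treat a $\ZZ/m$-grading of $A$, and likewise the derivation $E$, as an assignment of degrees to the given arrows $b,y,d,n$ making~\eqref{eq:relations} homogeneous. A grading is an element $\sigma\in\Aut_e(A)$ with $\sigma^m=1$, and in general $\sigma(b)=\lambda_b b+(\text{terms in }\rad^2A)$. Homogeneous arrow representatives exist, but they need not satisfy~\eqref{eq:relations}. So your lattice yields neither the bound $m\le2$, nor the three candidates, nor the identification of $\rep/\langle\varphi^2\rangle$ with one of three explicit covering-quiver algebras. Your trivial-cover argument has the same flaw. The paper supplies two things here:
\begin{itemize}
\item $\chi(\Aut_e(A))$ is computed over all automorphisms in Proposition~\ref{prop:linear-data}(1). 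The quadratic terms $\lambda_yB_2^2+\lambda_bB_2Y_2$ in the $byb$-coefficient cancel only after the $bdnyb$-coefficient is used.
\item Lemma~\ref{lem:diag} splits $\Aut_e(A)=N\rtimes D$ with $N$ unipotent, and uses Mostow's theorem to conjugate every finite-order element into the diagonal group $D$.
\end{itemize}
For derivations, the constraint $x\in\operatorname{im}(P\otimes_{\ZZ}k)$ must likewise be derived with all corner components of $\delta(a)$ taken into account. Once it is, your $0/1$ idea works and is a slight variant of Lemma~\ref{lem:trace}. Since $x_a\in\{0,1\}$, $x$ is a potential, and every arrow lies on an oriented cycle, $x=0$. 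So $E$ is nilpotent and idempotent, hence $0$, and this avoids the balance condition $\dim e_vC=\dim Ce_v$. Note, however, that ``every derivation acts nilpotently on arrows modulo $\rad^2$'' is false as stated: $\partial_h$ with $h=\sum_v\phi_ve_v$ scales $a$ by $\phi_{sa}-\phi_{ta}$.

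Step~2 asserts, rather than proves, the existence of the level-shifting power. The cover must be $\rep/\langle\varphi^t\rangle$ for a power of $\varphi$. Since $\varphi$ and $\Nak$ are in general comparable only on isomorphism classes of objects, $\langle\varphi\rangle\cap\langle\Nak\rangle$ can be trivial as a group of functors. What is needed is Lemma~\ref{lem:shift}: symmetry of $A$ makes the Nakayama permutation trivial, so $\Nak x=\varphi^{t_x}x$ on objects. Local boundedness and connectedness then force $t_x$ to be constant, and only then does Lemma~\ref{lem:grading} apply.

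In Step~1 your criterion is wrong. First, $\tau_3=\tau\Omega^2\cong\nu\Omega^4$, which is the identity on the stable category here. Second, $\add M$ can never be closed under $\Omega^3\cong\Omega^{-1}$ for a rigid $M$ with a nonprojective summand $X$, since $\Ext^1_A(\Omega^{-1}X,X)\cong\underline{\Hom}_A(X,X)\neq0$. Your candidate list fails for a similar reason: $bA=\Omega S_1$ and $yA=\Omega^3S_1$, and $\Ext^1_A(X,\Omega X)\neq0$ for every nonprojective $X$, so a module and its syzygy cannot both be summands. The module of Proposition~\ref{prop:RF} is $A\oplus S_2\oplus\Omega^2S_1\oplus\Omega^2S_3\oplus U$ with $U=U(2,1,2)$. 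Its maximality comes from Erdmann's description of the $2$-orthogonal of $M_0$, together with the overlap sequences~\eqref{eq:overlap} that exclude $U'$, $V$ and $W$.
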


The non-orbit argument does not use the cluster-tilting proof or the global dimension of $\Lambda$.
Consequently it proves more than Question~\ref{q:DI} asks: $A$ is not an orbit algebra of any
finite-dimensional $\Lambda$.

The proof that $A$ is not an orbit algebra runs as follows. A presentation $A\cong\rep/\langle\varphi\rangle$
can be normalized: because $A$ is symmetric, some power $\varphi^t$ agrees with the Nakayama shift on objects
(Lemma~\ref{lem:shift}). If $t=1$ this forces $A$ to carry a $\ZZ$-grading concentrated in degrees $0$ and $1$
with $A_1^2=0$ and $\dim A_1=\tfrac12\dim A$. If $t\ge 2$, the same holds for the connected $\ZZ/t\ZZ$-cover
$\Gamma_t=\rep/\langle\varphi^t\rangle$. The existence of such a grading is the nonemptiness of an affine
variety of idempotent derivations (Lemma~\ref{lem:der}). The connected $\ZZ/t\ZZ$-covers of $A$ are controlled
through arrow characters: a finite abelian group $H_t(A)$, built from two integer matrices, constrains their
degree vectors (Lemma~\ref{lem:census}), and a conjugacy argument in $\Aut_e(A)$ makes every grading diagonal
on the arrows (Lemma~\ref{lem:diag}), so that the covers become explicit smash products. For $A$ the group
$H_t(A)$ is nonzero only when $t$ is even, and connectivity leaves only three covers, all at $t=2$; the
derivation obstruction excludes these three covers and $A$ itself.

Section~\ref{sec:RF} proves the cluster-tilting assertion, extending~\cite{BM} to characteristic $0$ with a
different $3$-cluster-tilting module; Section~\ref{sec:nonorbit} proves the main theorem; in
Section~\ref{sec:sph}, the identity $H_t(S)=0$ eliminates every nontrivial connected cyclic cover of the
$3$-spherical algebra $S$, leaving only the derivation obstruction for $S$ itself.

The bases, syzygies, extension groups and matrix identities appearing below have also been double-checked by
computer, in exact arithmetic over $\mathbb{Q}$.

\section{Repetitive categories and orbit algebras}\label{sec:prelim}

Let $\Lambda$ be a basic finite-dimensional $k$-algebra with duality $D=\Hom_k(-,k)$. Following the
conventions of~\cite{DI}, the repetitive category $\rep$ is the category $\proj^{\ZZ}T(\Lambda)$ of finitely
generated $\ZZ$-graded projective modules over the trivial extension $T(\Lambda)=\Lambda\ltimes D\Lambda$,
graded by $T(\Lambda)_0=\Lambda$, $T(\Lambda)_1=D\Lambda$. It has objects $i[p]$, indexed by the vertices $i$
of $\Lambda$ and $p\in\ZZ$; its degree-$0$ part is a copy of $\Lambda$ on each level, and its degree-$1$ part
is a copy of the bimodule $D\Lambda$. It is locally bounded and self-injective. On a fixed basic skeleton, a
set of representatives of the isomorphism classes of indecomposable objects, the \emph{level} $\ell$ is well
defined by $\ell(i[p])=p$, and morphisms raise level by $0$ or $1$:
\[
  \rep(x,y)\neq 0 \implies \ell(y)-\ell(x)\in\{0,1\}.
\]
The Nakayama automorphism $\Nak$ acts on objects as the shift by one level, $\Nak(i[p])=i[p+1]$~\cite{DI}, so
$\ell(\Nak x)=\ell(x)+1$; being a Serre functor, it commutes with every $k$-linear automorphism of $\rep$ up
to natural isomorphism~\cite[(2.6)]{DI}. Only the induced statements on isomorphism classes of objects are
used below.

A $k$-linear automorphism $\varphi$ of $\rep$ is \emph{admissible} if $\varphi^i$ acts freely on the
isomorphism classes of indecomposable objects for every $i>0$; by~\cite[Lemma~3.4]{DI} this is equivalent to
the orbit category $\rep/\langle\varphi\rangle$ being finite-dimensional. The orbit algebra has one
indecomposable projective per $\varphi$-orbit of objects; writing $\bar x$ for the $\varphi$-orbit of an
object $x$,
\[
  (\rep/\langle\varphi\rangle)(\bar x,\bar y) = \bigoplus_{k\in\ZZ}\rep(x,\varphi^k y).
\]
The trivial extension $T(\Lambda)$ is the orbit algebra $\rep/\langle\Nak\rangle$. More generally, we say that
$B$ is a \emph{balanced square-zero extension} of $\Lambda$ if it carries a $\ZZ$-grading with support
$\{0,1\}$, with $B_0\cong\Lambda$ and $\dim_k B_1=\dim_k\Lambda$. The support forces $B_1 B_1\subseteq B_2=0$,
so $B=\Lambda\ltimes B_1$ is the square-zero extension by a $\Lambda$-bimodule $B_1$ with $\dim_k
B_1=\dim_k\Lambda$. This is more general than the trivial extension $T(\Lambda)=\Lambda\ltimes D\Lambda$ and
its twists $T_\sigma(\Lambda)=\Lambda\ltimes{}_1(D\Lambda)_\sigma$; we do not require $B_1\cong D\Lambda$. By
Lemma~\ref{lem:grading} below, the orbit algebras that concern us are balanced square-zero extensions of
$\Lambda$ or of a cover.

Throughout, $\Aut_e(C)$ denotes the group of $k$-algebra automorphisms of a basic algebra $C$ fixing a chosen
complete set of primitive orthogonal idempotents. If $Q$ is the Gabriel quiver, then $Q_0,Q_1$ are its vertex
and arrow sets. We write $sa,ta$ for the source and target of an arrow $a$.

\section{The cluster-tilting module}\label{sec:RF}

All modules in this section are finite-dimensional right modules. We call a module $2$-rigid if its first two
self-extension groups vanish; its left and right $2$-orthogonals are defined by the vanishing of $\Ext^i(-,M)$
and $\Ext^i(M,-)$, respectively, for $i=1,2$. For vertices $v_1,\dots,v_m$ we write $U(v_1,\dots,v_m)$ for a
uniserial module with composition factors $S_{v_1},\dots,S_{v_m}$, from top to socle. A uniserial module is
not in general determined up to isomorphism by its composition series; for $A$ below there are one-parameter
families of pairwise non-isomorphic uniserial modules with equal series, for instance with series
$(1,2,3,2,1)$. The notation therefore always refers to the specific modules constructed in the text.

We shall repeatedly use the following elementary overlap sequence. Let $L$ be a uniserial module of length
$r+s-1$ with $r,s>1$, and put
\[
 L'=L/\rad^{r}L,\qquad L''=\rad^{r-1}L,
\]
uniserial of lengths $r$ and $s$ with $\soc L'\cong\operatorname{top}L''$; call this simple module $T$. Let
$\pi\colon L\twoheadrightarrow L'$ and $\rho\colon L''\twoheadrightarrow T$ be the quotient maps and $j\colon
T\hookrightarrow L'$ the socle inclusion. Both $\pi|_{L''}$ and $j\rho$ kill $\rad L''=\rad^rL$ and have image
$\soc L'$, so after scaling $j$ we may assume $\pi|_{L''}=j\rho$. There is an exact sequence
\begin{equation}\label{eq:overlap}
 0\longrightarrow L''\xrightarrow{q\mapsto(\rho(q),q)}T\oplus L
 \xrightarrow{(j,-\pi)}L'\longrightarrow0.
\end{equation}
It is non-split: otherwise the indecomposable module $L'$ would be a summand of $T\oplus L$, contrary to
Krull--Schmidt.

Following Erdmann~\cite{Erdmann}, the candidate cluster-tilting module for this family of algebras is
\[
  M_0 = A \oplus S_2 \oplus \Omega^2 S_1 \oplus \Omega^2 S_3,
\]
where $S_i$ is the simple module at vertex $i$ and $\Omega$ is the syzygy. Erdmann shows that $M_0$ is rigid
across her family, and that it is itself $3$-cluster-tilting for her triangle algebra $T(\lambda)$ and
spherical algebra $S(\lambda)$ with $\lambda\neq0,1$~\cite[Corollary~5.6]{Erdmann} (her $S(\lambda)$ is the
six-vertex $n=2$ member of the spherical family, not the algebra $S$ of Section~\ref{sec:sph}). For the
multiplicity $k\ge2$ on our quiver she observes that the direct sum of $M_0$ with \emph{all} rigid uniserial
candidates has self-extensions, as witnessed by a non-split sequence. For our $A$ a single further summand
suffices: the module
\[
 U:=e_2A/(dA+ybdA),
\]
uniserial with composition series $(2,1,2)$ and dimension vector $(1,2,0)$.

\begin{proposition}\label{prop:RF}
The module $M=M_0\oplus U$ is a $3$-cluster-tilting $A$-module; hence $A$ is $3$-representation-finite.
\end{proposition}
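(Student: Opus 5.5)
We prove two things in turn. First, $M$ is $2$-rigid, i.e.\ $\Ext^i_A(M,M)=0$ for $i=1,2$. Second, every indecomposable $X$ with $\Ext^i_A(M,X)=0$ for $i=1,2$ lies in $\add M$. Since $A$ is symmetric, $\Ext^i_A(X,Y)\cong D\Ext^{i}_A(Y,\Omega^{-?}\ldots)$ is not needed in full; it suffices to use the Auslander--Reiten formula $\Ext^1_A(X,Y)\cong D\underline{\Hom}_A(Y,\Omega^{2}X)$ (as $\tau=\Omega^2$ for symmetric algebras), and the left/right orthogonals are then interchanged by $\Omega^{3}$-periodicity considerations. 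More concretely, by Iyama's criterion for self-injective algebras, once $M$ is $2$-rigid and contains $A$, $3$-cluster-tilting follows from one-sided maximality. So the plan reduces to rigidity and one-sided maximality.

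\textbf{Rigidity.} Erdmann~\cite{Erdmann} already shows $M_0$ is rigid. Her computations are combinatorial in the relations and carry over to characteristic $0$, but we recheck them using the explicit basis of Appendix~\ref{app:normal}. It remains to show $\Ext^i(U,M_0)=\Ext^i(M_0,U)=\Ext^i(U,U)=0$ for $i=1,2$. We compute a minimal projective resolution of $U=e_2A/(dA+ybdA)$ for three steps: $\Omega U$ is the submodule $dA+ybdA$ of $e_2A$, and we describe $\Omega^2U$ and $\Omega^3U$ explicitly as modules given by generators inside projectives. We then use $\Ext^i(U,N)\cong\underline{\Hom}(\Omega^iU,N)$ and check that every homomorphism $\Omega^iU\to N$, for $N$ a summand of $M$, factors through a projective. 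The maps in the other direction are handled by $\Ext^i(N,U)\cong\underline{\Hom}(\Omega^iN,U)$, where the syzygies $\Omega^iS_2$ and $\Omega^{i+2}S_{1},\Omega^{i+2}S_3$ come from the same resolutions Erdmann uses. The overlap sequence~\eqref{eq:overlap} is the tool for computing syzygies of uniserials: applied to uniserial pieces of $e_jA$, it identifies kernels such as $\Omega U$ as extensions of smaller uniserials, and it provides the non-split sequences needed to see that certain morphisms do not factor.

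\textbf{Maximality.} Here we use that $A$ is tame of quaternion type, with $\Omega$-period dividing $4$, and that every non-projective indecomposable in a $3$-cluster-tilting subcategory must satisfy $\Omega^{3}$-stability, i.e.\ $\add M$ is closed under $\tau_3=\tau\Omega^{2}=\Omega^4\cong\mathrm{id}$ on stable objects. The strategy is to show that $\underline{\Hom}(\Omega^{i}M,X)=0$ for $i=1,2$ forces $X\in\add M$. First, vanishing of $\underline{\Hom}$ from the simple-like summands $S_2,\Omega^2S_1,\Omega^2S_3,U$ and their syzygies severely restricts the top and socle of $X$. Concretely, $\Hom(\Omega^iS_2,X)$-type conditions control the multiplicity of $S_2$ in $\operatorname{top}X$ and $\soc X$. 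We would then show that $X$ must be uniserial of short length, and enumerate those candidates against the list. This is where the extra summand $U$ earns its place: it kills the one-parameter families of uniserials with series $(1,2,3,2,1)$ and $(2,1,2)$-type that survive orthogonality to $M_0$ alone.

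The main obstacle is this maximality step for arbitrary, possibly non-uniserial or band-type, $X$ in infinite representation type. We would try first to get a dimension-vector argument: an Euler-form-style computation of $\dim\Hom(\Omega^iM,X)-\dim\Hom(X,\Omega^{i-1}\ldots)$ forcing $\underline{\Hom}\ne0$ unless the dimension vector is tiny. Failing that, we would fall back on exhibiting, for each Auslander--Reiten component (tubes, by tame quaternion type), a summand of $M$ with nonzero stable maps into it.
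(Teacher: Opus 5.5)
There is a genuine gap in the maximality step, which you yourself flag as the main obstacle and leave open. In infinite representation type the whole difficulty is to show that only finitely many indecomposables are $2$-orthogonal to $M_0$, and your proposal has no mechanism for that. Your claim that vanishing against $S_2$, $\Omega^2S_1$, $\Omega^2S_3$ and their syzygies forces $X$ to be a short uniserial is asserted without an argument. Neither fallback would supply one. A self-injective algebra of infinite global dimension has no Euler form controlling $\underline{\Hom}$, so a dimension-vector computation has nothing to run on. Exhibiting nonzero stable maps into every tube just restates the problem for infinitely many (one-parameter families of) tubes. The observation $\tau_3\cong\Omega^4\cong\mathrm{id}$ is vacuous here and gives no leverage either.

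The missing input is Erdmann's classification \cite[Proposition~5.4]{Erdmann}, with the correction of Remark~\ref{rem:socle}. It says that every indecomposable nonprojective module in the $2$-orthogonal of $M_0$ is a uniserial subquotient of $\Omega^2S_1$ or $\Omega^2S_3$ with top and socle $S_2$. Its proof, via \cite[Lemmas~5.1--5.3]{Erdmann}, reduces to the corner $e_2Ae_2$. That corner becomes special biserial after factoring out $e_2(\soc A)e_2$, which is harmless because nonprojective indecomposables are killed by $\soc A$. The string classification there is what controls arbitrary, non-uniserial modules.

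With this input, maximality is a finite check:
\begin{itemize}
\item $\Omega^2S_1\cong P_2/yA$ and $\Omega^2S_3\cong P_2/dA$ are uniserial of length $7$.
\item Their intervals with top and socle $S_2$ realize only four isolated modules $U,U',V,W$ beyond $\add M_0$, each orthogonal to $M_0$ by \cite[Lemma~5.5]{Erdmann}.
\item The overlap sequences \eqref{eq:overlap} give $\Ext^1_A(U,U')\neq0$, $\Ext^1_A(V,U)\neq0$ and $\Ext^1_A(U,W)\neq0$.
\item Together with Ext-symmetry \eqref{eq:extsym}, this excludes $U',V,W$ from both orthogonals, so Iyama's one-sided criterion is not even needed.
\end{itemize}

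This also corrects your picture of what $U$ does. The uniserials with series $(1,2,3,2,1)$ have top $S_1$ and are already excluded by $M_0$. $U$ is the only module with series $(2,1,2)$: any such annihilator contains $dA+ybdA$, which already has the full dimension $13$. So $U$ is there to kill the finite set $\{U',V,W\}$, not a one-parameter family.

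Two smaller points. For rigidity there is a shortcut: $\Omega U=dA+ybdA$ is generated by elements ending at vertex $3$ while $Ue_3=0$, so $\Hom_A(\Omega U,U)=0$ and hence $\Ext^1_A(U,U)=0$. And the period-$4$ periodicity you take for granted needs a characteristic-$0$ justification, since $A$ is not a block of a group algebra. The paper gets it by identifying $A$ as a nonsingular general weighted surface algebra.
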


\begin{proof}
By \cite[\S3.2]{Erdmann}, the algebras $Q(3A)^k_2$ with $k\ge2$ are the general weighted surface algebras, in
the corrected sense of \cite{ErdmannSkowronskiGV,ErdmannSkowronskiCorr}, of the triangulation quiver obtained
from the quiver of~\eqref{eq:relations} by attaching virtual loops at the vertices $1$ and $3$, with weight
$k$ on the $g$-orbit of the four Gabriel arrows and all parameters equal to $1$. The algebra $A$ is not one of
the four singular algebras of~\cite{ErdmannSkowronskiGV}: the singular disc, tetrahedral and spherical
algebras live on Gabriel quivers with two or six vertices, and the singular triangle algebra is not symmetric
\cite[Remark~4.1 and Proposition~4.9]{ErdmannSkowronskiGV}, whereas $A$ is symmetric with a three-vertex
quiver (Appendix~\ref{app:normal}). Hence $A$ is periodic of period $4$ by
\cite[Theorem~1.3]{ErdmannSkowronskiGV}, and the Ext-symmetry
\begin{equation}\label{eq:extsym}
 \dim\Ext^2_A(X,Y)=\dim\Ext^1_A(Y,X)
\end{equation}
of \cite[Corollary~2.1]{Erdmann} holds and identifies the two $2$-orthogonals. By
\cite[Proposition~4.3]{Erdmann}, $M_0$ is $2$-rigid. Moreover, by Proposition~5.4 of~\cite{Erdmann}, an
indecomposable nonprojective module $X$ in the $2$-orthogonal of $M_0$ must be a uniserial subquotient of
$\Omega^2S_1$ or $\Omega^2S_3$, with top and socle $S_2$. Erdmann proves that proposition for the
$n$-spherical family, using Lemmas~5.1--5.3 there, and states that the proof adapts to the other members of
the class, including the present one; Remark~\ref{rem:socle} records a correction to the proof of Lemma~5.3.
Conversely, every such subquotient is orthogonal to $M_0$ by Lemma~5.5 of~\cite{Erdmann}, stated there for one
family of arms; the other family follows by applying the involution $\tau$ introduced below.

Write $P_i=e_iA$. Everything in the next two paragraphs is checked directly against the normal forms of
Appendix~\ref{app:normal}. The assignment
\[
 \tau(e_1,e_2,e_3)=(e_3,e_2,e_1),\qquad \tau(b,y,d,n)=(n,d,y,b)
\]
permutes the relations~\eqref{eq:relations} and hence defines an involutive algebra automorphism $\tau$ of
$A$; the twist $X\mapsto X_\tau$ along $\tau$ is a self-equivalence of the category of $A$-modules commuting
with $\Omega$, exchanging $S_1$ with $S_3$ and fixing $S_2$.

We first present the two second syzygies as explicit quotients of $P_2$. The right ideal $yA$ has basis
$y,yb,yby,ybd,ybdn,ybdny,ybdnyb,dnd,dndn$ (the last two lie in $yA$ because $dnd=ybdnybd$ and $dndn=ybyb$ in
$A$), and the images of $e_2,d,dn,dny,dnyb,dnybd,dnybdn$ form a basis of $P_2/yA$ in which the $i$-th radical
power is spanned by the images of the words of length at least $i$; thus $P_2/yA$ is uniserial with
composition series $(2,3,2,1,2,3,2)$. Its projective cover is $P_2$ with kernel $yA$, and the cover
$P_1\twoheadrightarrow yA$, $e_1\mapsto y$, has one-dimensional kernel, necessarily $\soc P_1\cong S_1$. Hence
$\Omega^2(P_2/yA)\cong S_1$; since $A$ is periodic of period $4$, the functor $\Omega^2$ is invertible on the
stable category with $\Omega^2\circ\Omega^2\cong\operatorname{id}$, so
\[
 \Omega^2S_1\cong P_2/yA=U(2,3,2,1,2,3,2),\qquad
 \Omega^2S_3\cong(\Omega^2S_1)_\tau\cong P_2/dA=U(2,1,2,3,2,1,2),
\]
the second with basis the images of $e_2,y,yb,ybd,ybdn,ybdny,ybdnyb$.

In both modules the factors $S_2$ occupy the radical layers $0,2,4,6$. The submodules of a uniserial module
$L$ are its radical powers, so the uniserial subquotients are the intervals $L[r,s]:=\rad^rL/\rad^{s+1}L$;
those with top and socle $S_2$ are $L$ itself, the simple $S_2$ (both already summands of $M_0$), and the
intervals with $(r,s)\in\{(0,2),(2,4),(4,6),(0,4),(2,6)\}$. Each interval is cyclic with top $S_2$, hence
isomorphic to $P_2/K$ with $K$ the right annihilator of a generator; $K$ is a right ideal whose dimension is
$16$ minus the length of the interval, so exhibiting a right ideal of that dimension inside $K$ identifies it;
the check takes at most two products. For $L=\Omega^2S_3=P_2/dA$:
\begin{itemize}
 \item $L[0,2]$ and $L[0,4]$ are generated by the image of $e_2$, with $K$ the preimage of $\rad^3L$,
respectively $\rad^5L$; from the basis of $P_2/dA$ these are $dA+ybdA$ and $dA+ybdnyA$.
 \item $L[2,4]$ and $L[2,6]$ are generated by the image of $yb$. For the first,
$yb\cdot y=yby=dnybdny\in dA$ and $yb\cdot dny=ybdny$ lies in the preimage of $\rad^5L$, so $K\supseteq
yA+dnyA$, of dimension $13=\dim K$; for the second, $yb\cdot y\in dA$ and $yb\cdot dnybd=dnd\in dA$ give
$K\supseteq yA+dnybdA$, of dimension $11=\dim K$.
 \item $L[4,6]$ is generated by the image of $ybdn$; here $ybdn\cdot d=0$ and $ybdn\cdot ybd=dnd\in dA$
give $K\supseteq dA+ybdA$, of dimension $13=\dim K$.
\end{itemize}
The intervals of $\Omega^2S_1=P_2/yA$ are the $\tau$-twists of these. Hence, with $U$ as in the proposition
and
\[
 U':=P_2/(yA+dnyA)\cong U_\tau,\qquad V:=P_2/(dA+ybdnyA),\qquad W:=P_2/(yA+dnybdA)\cong V_\tau,
\]
the ten intervals realize exactly the four isomorphism classes
\[
 U=U(2,1,2),\qquad U'=U(2,3,2),\qquad V=U(2,1,2,3,2),\qquad W=U(2,3,2,1,2),
\]
and these are the candidate summands beyond $\add M_0$.

Applying~\eqref{eq:overlap} to $L=V$ with $(r,s)=(3,3)$ and to $L=\Omega^2S_3$ with $(r,s)=(5,3)$ and $(3,5)$,
and identifying the ends by the interval census above, gives the non-split exact sequences
\[
 0\longrightarrow U'\longrightarrow S_2\oplus V\longrightarrow U\longrightarrow0,
\]
\[
 0\longrightarrow U\longrightarrow S_2\oplus\Omega^2S_3\longrightarrow V\longrightarrow0,
 \qquad
 0\longrightarrow W\longrightarrow S_2\oplus\Omega^2S_3\longrightarrow U\longrightarrow0.
\]

The kernel $\Omega U=dA+ybdA$ is generated by $d$ and $ybd$, two elements ending at vertex $3$. Since
$Ue_3=0$, every homomorphism $\Omega U\to U$ is zero, and the projective presentation $\Omega U\hookrightarrow
P_2\twoheadrightarrow U$ gives $\Ext^1_A(U,U)=0$. Ext-symmetry then gives $\Ext^2_A(U,U)=0$. Lemma~5.5
of~\cite{Erdmann} gives its orthogonality to $M_0$, and hence $M=M_0\oplus U$ is $2$-rigid. The three
displayed sequences show that $\Ext^1_A(U,U')$, $\Ext^1_A(V,U)$ and $\Ext^1_A(U,W)$ are nonzero, which
excludes $U'$ and $W$ from the right $2$-orthogonal $\{X:\Ext^{1,2}_A(M,X)=0\}$ and $V$ from the left one; by
Ext-symmetry~\eqref{eq:extsym} the same nonvanishings read $\Ext^2_A(U',U)\neq0$, $\Ext^2_A(U,V)\neq0$ and
$\Ext^2_A(W,U)\neq0$, excluding each candidate from the other orthogonal as well. Therefore every
indecomposable module in either $2$-orthogonal of $M$ lies in $\add M$. Thus $M$ is $3$-cluster-tilting.
\end{proof}

\begin{remark}\label{rem:socle}
The proof of \cite[Lemma~5.3]{Erdmann} asserts that the relevant idempotent corner $e\Lambda e$ is special
biserial, the composites of consecutive corner arrows being zero. As stated this fails: those composites span
the corner socle, which is nonzero. For $A$, the corner $e_2Ae_2$ is generated by the loops $u=yb$ and $v=dn$
with $u^2=v^2=dndn\neq0$; in the $n$-spherical algebras the analogous composites equal the socles of the
projectives $P_{a_i}$, which is forced in any presentation by the triangle relations. The conclusion of the
lemma survives, since every indecomposable nonprojective module is annihilated by $\soc\Lambda$, and the
quotient of $e\Lambda e$ by the ideal $e(\soc\Lambda)e$ is special biserial; the string classification invoked
there then applies to all modules in question.
\end{remark}

\begin{remark}\label{rem:erd-tri}
Proposition~\ref{prop:RF} also shows that $M$ is maximal $2$-orthogonal: any $X$ with $\Ext^{i}_A(M\oplus
X,M\oplus X)=0$ for $i=1,2$ lies in $\add M$. Erdmann's non-split sequence (the first sequence displayed in
the proof) explains why the choice of summand matters: $U$ and $U'\cong U_\tau$ are each rigid, but
$\Ext^1_A(U,U')\neq 0$, so they cannot both be adjoined to $M_0$. The discussion in~\cite[\S5]{Erdmann} rules
out adjoining \emph{all} uniserial candidates simultaneously; but a $3$-cluster-tilting module containing
$M_0$ need only contain a self-orthogonal subset of the candidates, and Proposition~\ref{prop:RF} shows that
adjoining $U$ alone succeeds. The same issue arises for the $3$-spherical algebra (Remark~\ref{rem:erd-sph}).
\end{remark}

\section{The algebra is not an orbit algebra}\label{sec:nonorbit}

\begin{lemma}\label{lem:reduction}
Let $C$ be a basic connected finite-dimensional algebra with $C\cong\rep/\langle\varphi\rangle$ for some
finite-dimensional algebra $\Lambda$ and admissible automorphism $\varphi$. Then
$C\cong\widehat{\Lambda'}/\langle\varphi'\rangle$ for a basic \emph{connected} algebra $\Lambda'$ and an
admissible automorphism $\varphi'$ of $\widehat{\Lambda'}$.
\end{lemma}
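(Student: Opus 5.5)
Write $\Lambda=\Lambda_1\times\dots\times\Lambda_m$ as a product of basic connected algebras; we may also assume $\Lambda$ basic, replacing it by its basic algebra, which has an equivalent repetitive category. The plan is to show that $\rep$ is the disjoint union of the $\widehat{\Lambda_i}$, that $\varphi$ permutes these components, and that the orbit algebra collapses to the orbit algebra of a single component under a suitable power of $\varphi$.

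First, the decomposition. Since $T(\Lambda)=\prod_i T(\Lambda_i)$ as graded algebras ($D\Lambda=\bigoplus_i D\Lambda_i$ as a bimodule), we get $\proj^{\ZZ}T(\Lambda)=\coprod_i\proj^{\ZZ}T(\Lambda_i)$. Hence $\rep=\coprod_i\widehat{\Lambda_i}$, with no nonzero morphisms between different summands. Each $\widehat{\Lambda_i}$ is connected: within a level the objects $j[p]$ are linked through $\Lambda_i$, and $j[p]$ is linked to the next level because $\rep(j[p],\Nak j[p])\supseteq (D\Lambda_i)(j,j)\neq0$ via the socle of the injective. A $k$-linear automorphism $\varphi$ sends connected components to connected components, so it permutes the finite set $\{\widehat{\Lambda_1},\dots,\widehat{\Lambda_m}\}$.

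Next, connectedness of $C$. The orbit formula shows that $\rep/\langle\varphi\rangle(\bar x,\bar y)\ne0$ only if some $\varphi^k y$ lies in the component of $x$. Hence the orbit category splits along the $\langle\varphi\rangle$-orbits of components, and since $C$ is connected there is exactly one such orbit. Relabel so that $\varphi$ cyclically permutes $\widehat{\Lambda_1},\dots,\widehat{\Lambda_s}$, where $s=m$ and $\varphi(\widehat{\Lambda_i})=\widehat{\Lambda_{i+1}}$ with indices mod $s$. Put $\Lambda'=\Lambda_1$ and let $\varphi'=\varphi^s|_{\widehat{\Lambda_1}}$, an automorphism of $\widehat{\Lambda'}$. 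It is admissible, since $(\varphi')^i=\varphi^{si}|$ acts freely on isoclasses because $\varphi^{si}$ does.

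Finally, the comparison. The inclusion $\widehat{\Lambda_1}\hookrightarrow\rep$ induces a functor $F\colon\widehat{\Lambda_1}/\langle\varphi'\rangle\to\rep/\langle\varphi\rangle$. Every $\varphi$-orbit meets $\widehat{\Lambda_1}$, so $F$ is dense. For $x,y\in\widehat{\Lambda_1}$ we have $\rep(x,\varphi^k y)\neq0$ only when $\varphi^k y\in\widehat{\Lambda_1}$, that is, when $s\mid k$. So
\[
\textstyle\bigoplus_k\rep(x,\varphi^k y)=\bigoplus_l\widehat{\Lambda_1}(x,(\varphi')^l y),
\]
and $F$ is fully faithful. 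Two objects of $\widehat{\Lambda_1}$ lie in the same $\varphi$-orbit exactly when they lie in the same $\varphi'$-orbit, so the identification of objects is bijective. This gives $C\cong\widehat{\Lambda'}/\langle\varphi'\rangle$.

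The main obstacle I expect is bookkeeping in this last step, in two places. First, the orbit category is defined on objects of $\rep$, and isomorphism classes of objects are matched only up to the skeleton. Second, one must check that composition agrees under the identification, i.e.\ that the grading shift $k\mapsto sl$ respects the composition rule $f\circ g$ involving $\varphi^k(g)$. I will handle this by writing composition in the orbit category explicitly and noting that $\varphi^{sl}$ restricted to $\widehat{\Lambda_1}$ is $(\varphi')^l$ by definition.
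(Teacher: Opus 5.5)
Your treatment of the disconnected case is correct and matches the paper's argument: split $\rep$ into the $\widehat{\Lambda_i}$, use connectedness of $C$ to make $\varphi$ transitive on them, and restrict $\varphi^s$ to one component. You supply details the paper leaves implicit. One is the connectedness of each $\widehat{\Lambda_i}$, which is what makes $\varphi$ permute them. The other is the check that the comparison functor is fully faithful and bijective on objects.

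The gap is the opening reduction, ``we may also assume $\Lambda$ basic, replacing it by its basic algebra, which has an equivalent repetitive category.'' This is half of the paper's proof, and it is the only place where the hypothesis that $C$ is basic is used; your argument never uses it. An equivalence of repetitive categories carries $\varphi$ only to an autoequivalence, whereas the orbit construction and admissibility are set up for automorphisms. Even after rigidifying, the new orbit category is only equivalent to $\rep/\langle\varphi\rangle$. So its algebra is a priori only Morita equivalent to $C$, while the lemma demands an isomorphism with $\Lambda'$ basic.

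The paper closes this in three steps:
\begin{enumerate}
\item By \cite[Definition~2.16]{DI}, an autoequivalence of a Krull--Schmidt category induces an automorphism of a basic skeleton (one object per isoclass of indecomposables), with equivalent orbit category.
\item The orbit algebra of the skeleton is basic. An isomorphism $\bar x\cong\bar y$ must, by locality of the endomorphism ring of the indecomposable $x$, have an invertible component $x\to\varphi^ky$. This forces $x=\varphi^ky$ in the skeleton, so distinct orbits give non-isomorphic projectives.
\item A basic algebra Morita equivalent to the basic algebra $C$ is isomorphic to it.
\end{enumerate}

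This passage is also implicitly needed in your decomposition step. The additive category $\proj^{\ZZ}T(\Lambda)$ is the product of the $\proj^{\ZZ}T(\Lambda_i)$, not their disjoint union. It contains objects $P\oplus P'$ with summands over different factors, and these link the factors. Only its skeleton of indecomposables splits into the connected pieces $\widehat{\Lambda_i}$ that your argument uses.
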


\begin{proof}
An autoequivalence of a Krull--Schmidt category induces an automorphism of a chosen basic
skeleton~\cite[Definition~2.16 and the paragraph following it]{DI}. Replacing $\Lambda$ by a basic algebra and
$\rep$ by this skeleton therefore replaces the orbit category by an equivalent one. The resulting orbit
algebra is basic: its indecomposable projectives are indexed by the $\varphi$-orbits of objects, and distinct
orbits give non-isomorphic projectives: an isomorphism $\bar x\cong\bar y$ in the orbit category has, since
endomorphism rings of indecomposables are local, an invertible component $x\to\varphi^ky$ for some $k$, whence
$x\cong\varphi^ky$. Being basic and Morita equivalent to $C$, it is isomorphic to $C$.

If $\Lambda=\prod_j\Lambda_j$ is disconnected, then $\rep$ is the disjoint union of the repetitive categories
of the $\Lambda_j$. Connectedness of $C$ forces $\varphi$ to act transitively on these components. If their
number is $r$, restriction to one component identifies the orbit category with
$\widehat{\Lambda_j}/\langle\varphi^r\rangle$, and $\varphi^r$ remains admissible. Thus one may also replace
$\Lambda$ by the connected algebra $\Lambda_j$.
\end{proof}

\begin{theorem}\label{thm:nonorbit}
The algebra $A$ of~\eqref{eq:relations} is not isomorphic to any orbit algebra $\rep/\langle\varphi\rangle$
for a finite-dimensional algebra $\Lambda$ and an admissible automorphism $\varphi$ of $\rep$.
\end{theorem}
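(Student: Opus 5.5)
We argue by contradiction along the outline of the introduction. Suppose $A\cong\rep/\langle\varphi\rangle$. By Lemma~\ref{lem:reduction} we may assume $\Lambda$ basic and connected and $\rep$ replaced by a basic skeleton. The first step is the normalization (the content of Lemma~\ref{lem:shift}). Since $A$ is symmetric, its Nakayama permutation on vertices is trivial. The Nakayama functor of the orbit algebra is induced by $\Nak$, so $\Nak x\cong\varphi^k x$ for every object $x$, with $k$ depending on $x$. Using $\ell(\Nak x)=\ell(x)+1$, that $\Nak$ commutes with $\varphi$ up to isomorphism, and connectedness of $\rep$, we want a single $t\ge1$ with $\varphi^t\cong\Nak$ on objects. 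We also need $\varphi$ itself to shift levels by $1/t$ on average, so that the level function descends.

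Next we extract a grading (Lemma~\ref{lem:grading}). Put $\Gamma_t=\rep/\langle\varphi^t\rangle$. Since $\varphi^t$ acts on objects as $\Nak$, the orbit category $\Gamma_t$ is $\bigoplus_k\rep(x,\varphi^{tk}y)$. Since morphisms raise level by $0$ or $1$, grading by level difference gives a $\ZZ$-grading of $\Gamma_t$ with support $\{0,1\}$. Its degree-$0$ part is $\Lambda$, and its degree-$1$ part is a copy of $D\Lambda$ twisted by the isomorphism $\varphi^t\cong\Nak$. Hence $\Gamma_t$ is a balanced square-zero extension of $\Lambda$, with $\dim(\Gamma_t)_1=\frac12\dim\Gamma_t$ and $(\Gamma_t)_1^2=0$. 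Moreover $\Gamma_t$ is a connected Galois $\ZZ/t\ZZ$-cover of $A$: the group $\langle\varphi\rangle/\langle\varphi^t\rangle$ acts freely, $A\cong\Gamma_t/(\ZZ/t)$, and connectedness of $\Gamma_t$ follows from connectedness of $\rep$. For $t=1$ we have $\Gamma_1=A$.

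Then we translate these conditions into computations. A $\ZZ$-grading of $\Gamma$ with support $\{0,1\}$ is the same as an idempotent derivation $\delta$ with $\delta^2=\delta$, acting as the degree operator and killing the chosen idempotents. Balancedness is the condition $\operatorname{rank}\delta=\frac12\dim\Gamma$. By Lemma~\ref{lem:der} this is the nonemptiness of an explicit affine variety inside $\Der(\Gamma)$. Connected $\ZZ/t$-covers of $A$ correspond to $\ZZ/t$-gradings of $A$. By Lemma~\ref{lem:diag} such a grading may be conjugated within $\Aut_e(A)$ to be diagonal on the arrows $b,y,d,n$, so it is given by a degree vector in $(\ZZ/t)^4$ making the relations~\eqref{eq:relations} homogeneous. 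Lemma~\ref{lem:census} packages the homogeneity constraints into the group $H_t(A)$. The relations $byb=bdnybdn$ and $yby=dnybdny$ give $\deg(dn)+\deg(dnyb)=0$, and similarly on the other side. Solving these yields $H_t(A)=0$ for $t$ odd. For $t$ even, the gradings modulo those concentrated on vertex potentials (which give disconnected or trivial covers) leave exactly three connected double covers. We write each as a smash product $A\#\ZZ/2$ with an explicit basis of dimension $72$.

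Finally, we show that none of the four algebras $A$ and the three covers $\Gamma_2$ has an idempotent derivation of half rank. For each, compute $\Der$ modulo inner derivations. Degree-$0$ derivations diagonal on arrows have the form $\delta(a)=\lambda_a a$ plus corrections of higher radical length. For $\delta$ to be idempotent, the eigenvalues $\lambda_a$ must lie in $\{0,1\}$ and make the relations homogeneous. In $A$, the relation $byb=bdnybdn$ forces $2\lambda_b+\lambda_y=3\lambda_b+\ldots$, and a parity or dimension count shows no $0/1$ assignment makes the degree-$1$ part $18$-dimensional. The main obstacle is justifying the reduction to diagonal $\delta$ for the covers. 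The nonzero lower-order terms of derivations might produce non-diagonalizable idempotents. I would first try to conjugate an arbitrary idempotent derivation by an element of $\Aut_e$ so that it becomes semisimple and diagonal on arrows, using that $k$ has characteristic $0$ and that the eigenvalues lie in $\{0,1\}$. The obstruction then reduces to a finite integer check that arrow-degree vectors in $\{0,1\}^{Q_1}$ compatible with the relations never give half-dimensional degree-$1$ part.
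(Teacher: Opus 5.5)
Your reduction matches the paper's. You normalize to $\varphi^t=\Nak$ on objects, put the support-$\{0,1\}$ grading on $\Gamma_t=\rep/\langle\varphi^t\rangle$, identify $\Gamma_t$ with a smash cover, and use $H_t(A)$ to cut down to $t=1$ and the three double covers $A_{pq}$. There are two slips in this part. First, homogeneity of $byb=bdnybdn$ and $ndn=nybdnyb$ gives $2(d_d+d_n)\equiv 2(d_b+d_y)\equiv 0\pmod t$, not $\deg(dn)+\deg(dnyb)=0$. Second, Lemma~\ref{lem:diag} applies only after checking its hypothesis $\chi(D)=\chi(\Aut_e(A))$, which is the nonlinear computation of Proposition~\ref{prop:linear-data}(1).

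The genuine gap is the last step, which is where the obstruction actually lives. You propose to conjugate a half-rank idempotent derivation $E$ until it is diagonal on the given arrows, then test homogeneity of the relations, and you concede you cannot justify this for the covers. Homogeneous arrow representatives always exist for the grading of $E$, since the radical and its square are graded. But the relations in those representatives are unknown. To make the \emph{given} arrows of $A_{pq}$ homogeneous you would need control of $\Aut_e(A_{pq})$, e.g.\ $\chi(D)=\chi(\Aut_e(A_{pq}))$, for each $72$-dimensional cover; you neither state nor compute this. For $A$ itself, ``a parity or dimension count'' is not an argument, and $2\lambda_b+\lambda_y=3\lambda_b+\dots$ is not what the relation says.

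The missing idea is that no diagonalization is needed. Only the first-order arrow coefficients $x_a$ of idempotent-annihilating derivations matter, where $\delta(a)\equiv x_aa$ mod $\rad^2$. Linearizing the binomial relations shows these satisfy a linear system whose solution space is exactly $\operatorname{im}(P\otimes k)$, for $A$ and for each $A_{pq}$ (Proposition~\ref{prop:linear-data}(1),(3)). The paper then applies the trace criterion, Lemma~\ref{lem:trace}. One writes $\delta=\partial_h+\epsilon$ with $h=\sum_v\phi_ve_v$ and $\epsilon$ raising radical degree, so $\tr\delta=\sum_v\phi_v(\dim e_vC-\dim Ce_v)=0$. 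An element of $\cV$, by contrast, has trace $\tfrac12\dim C$.

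Your eigenvalue idea can also be finished from the same linear data. For idempotent $E$ each $x_a\in\{0,1\}$, and $x=P\phi$ makes the sum of the $x_a$ around every oriented cycle vanish. Every arrow of $A$ and of each $A_{pq}$ lies on an oriented cycle, so $x=0$; then $E$ is nilpotent, hence $E=0$. Either way, the linear computation of Proposition~\ref{prop:linear-data} is what your proposal lacks.
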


\begin{lemma}\label{lem:shift}
Let $C=\rep/\langle\varphi\rangle$ be symmetric, connected and finite-dimensional with $\varphi$ admissible.
After replacing $\varphi$ by $\varphi^{-1}$ if necessary, there is an integer $t\ge 1$ with $\varphi^t x=\Nak
x$ for every object $x$ of $\rep$.
\end{lemma}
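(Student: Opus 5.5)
The plan is to compare two automorphisms of the orbit category $C$: the one induced by $\Nak$, and the identity, which is a Nakayama automorphism of $C$ because $C$ is symmetric. The hope is to force $\Nak$ to act on the objects of $\rep$ as a power of $\varphi$, and then pin down that power with the level function.

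\textbf{Step 1.} By \cite[(2.6)]{DI}, $\Nak$ commutes with $\varphi$ up to natural isomorphism. On isomorphism classes of objects of the basic skeleton this becomes an honest equality $\Nak\varphi x=\varphi\Nak x$. Hence $\Nak$ permutes $\varphi$-orbits and descends to an automorphism $\bar\nu$ of $C$. Since $\Nak$ is the Serre functor of $\rep$, the induced $\bar\nu$ is a Nakayama functor of the orbit category; this is the standard fact that $\nu_C$ is induced by $\Nak$ on $\rep/\langle\varphi\rangle$ (as in \cite{DI}, since $D C(\bar x,-)\cong\bigoplus_k D\rep(x,\varphi^k-)$). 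Symmetry of $C$ gives $\nu_C\cong\operatorname{id}$, so $\bar\nu$ fixes every isomorphism class of indecomposable projective $C$-module. In the basic setting of Lemma~\ref{lem:reduction}, distinct orbits give non-isomorphic projectives. Therefore for every object $x$ there is an integer $m(x)$ with $\Nak x=\varphi^{m(x)}x$.

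\textbf{Step 2.} Show that $m(x)$ is constant. Admissibility makes $m(x)$ unique, because $\varphi$ acts freely on classes. If $\rep(x,y)\neq0$ with $x,y$ indecomposable, the nonzero morphism $\Nak(f)\colon \varphi^{m(x)}x\to\varphi^{m(y)}y$ pulls back by $\varphi^{-m(x)}$ to a nonzero morphism $x\to\varphi^{m(y)-m(x)}y$. Combine this with the level function. I would first show that $\varphi$ shifts levels by a constant $c$, i.e. $\ell(\varphi x)=\ell(x)+c$. For connected $\Lambda$, the category $\rep$ is connected, and every morphism raises level by $0$ or $1$. An automorphism preserves nonzero Hom, so the difference $\ell(\varphi x)-\ell(x)$ changes by at most $1$ along arrows. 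Then use $\varphi\Nak=\Nak\varphi$ and the fact that $\Nak$ is the unique level-$1$ shift; equivalently, argue that $\ell(\varphi y)-\ell(\varphi x)=\ell(y)-\ell(x)$ whenever $\rep(x,y)\ne0$. I expect this last equality to be the main obstacle. The claim is that an automorphism cannot interchange level-$0$ and level-$1$ morphisms. Morphisms within a level come from $\Lambda$, and the level-raising ones come from $D\Lambda$, which is the socle-type part ($\rad$-nilpotency considerations). I would first try to characterize level-raising maps intrinsically, e.g. via the finite-dimensionality of $\rep(x,-)$ pieces or via composites with $\Nak$. If that fails, I would avoid the issue by using only $m$. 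Taking $f$ inside one copy of $\Lambda$ and then inside $D\Lambda$, both $x$ and $\varphi^{m(y)-m(x)}y$ have level $\ell(x)$ or $\ell(x)+1$, while $\ell(y)$ is fixed. This bounds $|m(y)-m(x)|c$ by $1$ unless $m(y)=m(x)$, and admissibility ($c\neq0$, since otherwise $\varphi$ would permute the finitely many objects of some level and some power would fix one) gives $|c|\ge1$.

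\textbf{Step 3.} Conclude. With $c\ne0$, comparing levels in $\Nak x=\varphi^{m(x)}x$ gives $m(x)c=1$. So $c=\pm1$ and $m(x)=c$ for all $x$, which in fact makes Step 2 automatic once $c$ is constant. Replacing $\varphi$ by $\varphi^{-1}$ if $c=-1$ gives $\varphi x=\Nak x$, i.e. $t=1$. If instead constancy of $c$ fails and only a period appears, I would state the conclusion with $t=m$ as in the lemma: connectedness of $C$ and the bound from Step 2 make $m$ locally constant, hence constant, and after possibly inverting $\varphi$ we get $t=m\ge1$.
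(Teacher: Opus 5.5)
Your Step~1 is sound and matches the paper's first step: Serre duality of $\rep$ descends to $C$, so the Nakayama permutation of $C$ is induced by $\Nak$. Symmetry then gives $\Nak x=\varphi^{m(x)}x$ with $m(x)\neq0$ unique.

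\textbf{The gap.} Steps~2--3 rest on the claim that $\varphi$ shifts levels by a constant $c$. That claim is false, and so is its consequence $t=1$. For $\Lambda=kA_2$, the category $\rep$ is the linear quiver $\cdots\to1[0]\to2[0]\to1[1]\to2[1]\to\cdots$ with all paths of length $3$ zero. The translation $\varphi\colon 1[p]\mapsto2[p]$, $2[p]\mapsto1[p+1]$ is admissible with $\varphi^2=\Nak$, and its orbit algebra is the symmetric local algebra $k[x]/(x^3)$. Here $\ell(\varphi x)-\ell(x)$ equals $0$ at $1[p]$ and $1$ at $2[p]$, and $t=2$ is forced. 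This is why the lemma allows $t\ge2$, and why the main proof has to handle the covers $\Gamma_t$.

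Your fallback does not close the gap. The bound you invoke for local constancy of $m$ is stated in terms of $c$. Without a uniform level shift, you have no argument that $m(x)=m(y)$ when $\rep(x,y)\neq0$.

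\textbf{The missing idea.} Iterate the morphism $x\to\varphi^{m(y)-m(x)}y$ that you already constructed, instead of comparing levels.
Put $s=m(y)-m(x)$ and $\psi=\varphi^{m(x)}\circ\Nak^{-1}$. Using $\Nak\varphi=\varphi\Nak$ on objects, one gets $\psi x=x$ and $\psi y=\varphi^{-s}y$. Applying $\psi^{k}$ to a nonzero $f\in\rep(x,y)$ then gives $\rep(x,\varphi^{-ks}y)\neq0$ for all $k\in\ZZ$. If $s\neq0$, freeness makes these infinitely many distinct objects. That contradicts local boundedness of $\rep$: all of them would lie in the two levels $\ell(x),\ell(x)+1$, which contain only finitely many objects.

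To finish, you also need three facts. First, $m(\varphi x)=m(x)$, which makes $m$ a function on the vertices of $C$. Second, each arrow of $C$ lifts to a nonzero morphism between some representatives. With these, connectedness of $C$ forces $m$ to be constant. Third, inverting $\varphi$ if needed makes $m$ positive. This is exactly the paper's argument.
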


\begin{proof}
Throughout the proof we work with objects of the fixed skeleton and write $=$ for isomorphism in $\rep$. The
push-down functor sends the representable projective at $x$ to the indecomposable projective $P_{\bar x}$ of
$C$ at its orbit, giving the bijection $\ind(\rep)/\langle\varphi\rangle\leftrightarrow\ind(\proj C)$
of~\cite[\S3]{DI}. The defining Serre duality $D\rep(x,-)\cong\rep(-,\Nak x)$ descends through the orbit
functor: for objects $x,y$ of the skeleton,
\[
 DC(\bar x,\bar y)\cong\bigoplus_{k\in\ZZ}D\rep(x,\varphi^ky)
 \cong\bigoplus_{k\in\ZZ}\rep(\varphi^ky,\Nak x)
 \cong\bigoplus_{k\in\ZZ}\rep(y,\varphi^{-k}\Nak x)=C(\bar y,\overline{\Nak x}),
\]
where the sums have finitely many nonzero terms because $C$ is finite-dimensional, the middle isomorphism is
the Serre duality of $\rep$, the third applies the autoequivalence $\varphi^{-k}$, and all the isomorphisms
are natural in $y$. Hence the Nakayama functor of $C$ sends $P_{\bar x}$ to $P_{\overline{\Nak x}}$. Since $C$
is symmetric this permutation of the indecomposable projectives is trivial, so $\overline{\Nak x}=\bar x$ for
every $x$. Thus each object $x$ satisfies $\varphi^{t_x}x=\Nak x$ for some $t_x\in\ZZ$, and $t_x\neq 0$
because $\Nak x$ and $x=\varphi^0x$ have different levels; this $t_x$ is unique because
$\langle\varphi\rangle$ acts freely on the objects of the skeleton. As
$\Nak\varphi\simeq\varphi\Nak$~\cite[(2.6)]{DI}, on objects $\Nak(\varphi x)=\varphi(\Nak
x)=\varphi^{t_x}(\varphi x)$, which gives $t_{\varphi x}=t_x$, and likewise $t_{\Nak x}=t_x$; hence $x\mapsto
t_x$ descends to the orbit set $V_C=\Obj(\rep)/\langle\varphi\rangle$.

Let $\alpha,\beta\in V_C$ with $e_\alpha C e_\beta\neq 0$. Because
$C(\alpha,\beta)\cong\bigoplus_k\rep(x,\varphi^k y)$ for representatives $x,y$, some representatives satisfy
$\rep(x,y)\neq 0$. Suppose $t_\alpha\neq t_\beta$, and put $s=t_\alpha-t_\beta\neq 0$ and
$\psi=\varphi^{t_\alpha}\circ\Nak^{-1}$. Then $\psi^k x=x$ and $\psi^k y=\varphi^{ks}y$ for all $k$, and since
each $\psi^k$ is a fully faithful autoequivalence, $\dim\rep(x,\varphi^{ks}y)=\dim\rep(x,y)\neq 0$ for all
$k$. By freeness the objects $\varphi^{ks}y$ are pairwise distinct, so $x$ maps nontrivially to infinitely
many objects, contradicting the local boundedness of $\rep$. Thus $t_\alpha=t_\beta$ across every arrow of
$C$, and since $C$ is connected $t$ is constant. Its common value is nonzero, and replacing $\varphi$ by
$\varphi^{-1}$ makes it positive (cf.~\cite[Remark~3.3]{DI}).
\end{proof}

\begin{lemma}\label{lem:grading}
Let $\psi$ be an automorphism of $\rep$ with $\ell(\psi x)=\ell(x)+1$ for every object $x$. Then
$B:=\rep/\langle\psi\rangle$ is a balanced square-zero extension of $\Lambda$: it carries a $\ZZ$-grading with
support $\{0,1\}$, $B_0\cong\Lambda$ as algebras, $B_1 B_1=0$, and $\dim B_1=\dim\Lambda=\tfrac12\dim B$.
\end{lemma}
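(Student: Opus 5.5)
The plan is to grade the orbit algebra by level differences. Fix the basic skeleton of $\rep$ and choose, as a set of representatives of the $\psi$-orbits of objects, the objects of level $0$, namely the $i[0]$ for $i$ a vertex of $\Lambda$. Since $\ell(\psi x)=\ell(x)+1$, each orbit meets level $0$ in exactly one object, so these representatives are well defined and in bijection with the vertices of $\Lambda$. For two level-$0$ representatives $x,y$ we have
\[
 B(\bar x,\bar y)=\bigoplus_{k\in\ZZ}\rep(x,\psi^k y),
\]
and $\psi^k y$ has level $k$. Because morphisms raise level by $0$ or $1$, only $k=0$ and $k=1$ contribute. We then define $B_k:=\bigoplus_{x,y}\rep(x,\psi^k y)$ for $k\in\{0,1\}$ and $B_k=0$ otherwise.

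Next we check that this is an algebra grading. The composition in the orbit category takes $f\in\rep(x,\psi^k y)$ and $g\in\rep(y,\psi^m z)$ to $\psi^k(g)\circ f\in\rep(x,\psi^{k+m}z)$, so degrees add. Consequently $B_1B_1\subseteq B_2=0$ and the support is $\{0,1\}$.

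For the degree-$0$ part, $B_0=\bigoplus_{x,y}\rep(x,y)$ with $x,y$ at level $0$. This is the full subcategory of $\rep$ on level $0$, which is the degree-$0$ part $\Lambda$ on one level, so $B_0\cong\Lambda$ as algebras; the composition agrees because $\psi^0=\mathrm{id}$.

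For the degree-$1$ part, we use that $\psi^1 y$ runs over the objects of level $1$ as $y$ runs over level $0$, since $\psi$ permutes the isoclasses of objects and shifts level by one. Hence
\[
 B_1\cong\bigoplus_{x\in\text{level }0,\;w\in\text{level }1}\rep(x,w),
\]
which is the degree-$1$ part of $\rep$ between consecutive levels, a copy of $D\Lambda$ as a vector space. So $\dim B_1=\dim D\Lambda=\dim\Lambda$, and $\dim B=2\dim\Lambda$.

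The main point needing care is the identification of $\rep(x,\psi y)$ as $y$ varies. Here $\psi$ need not equal $\Nak$ on objects, but it is a bijection from level-$0$ isoclasses to level-$1$ isoclasses, and only dimensions are needed for $B_1$, so this suffices. One must also check that the orbit-category composition actually respects the chosen grading, which follows from the formula above since $\psi$ is a functor preserving the level shift.
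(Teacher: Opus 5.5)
Your proposal is correct and follows the paper's proof essentially step for step. Both take level-$0$ representatives, grade by the power $k$ of $\psi$ (the level rule leaves only $k\in\{0,1\}$), and get additivity of degrees from the composite $\psi^k(g)\circ f$. Both identify $B_0\cong\Lambda$ untwisted via $\psi^0=\mathrm{id}$, and obtain $\dim B_1=\dim D\Lambda$ because $\psi$ carries level-$0$ objects bijectively onto level-$1$ objects.
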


\begin{proof}
Take level-$0$ representatives, so that $B(\bar x,\bar y)=\bigoplus_k\rep(x,\psi^k y)$ with $\psi^k y$ of
level $k$; the level rule leaves only $k\in\{0,1\}$, and we grade $B$ by $k$. For $f\colon x\to\psi^k y$ and
$g\colon y\to\psi^l z$ the orbit-category composite is $\psi^k(g)\circ f\colon x\to\psi^{k+l}z$; because
$\psi$ shifts level uniformly, degrees add, and $(k,l)=(1,1)$ would land in level shift $2$, forcing
$B_1B_1=0$. Degree $0$ composes with $\psi^0=\mathrm{id}$, so $B_0\cong\Lambda$ untwisted. As $y$ ranges over
the level-$0$ objects, $\psi y$ ranges over all level-$1$ objects. The morphisms from level $0$ to level $1$
form the $D\Lambda$ part of the repetitive category, so $\dim B_1=\dim D\Lambda=\dim\Lambda$.
\end{proof}

\begin{lemma}\label{lem:der}
Under the standing characteristic-zero hypothesis, let $C$ be a basic finite-dimensional algebra with a
complete set of primitive orthogonal idempotents $e_1,\dots,e_n$. Then $C$ is a balanced square-zero
extension, equivalently it admits a $\ZZ$-grading with support $\{0,1\}$ and $\dim C_1=\tfrac12\dim C$, if and
only if the affine variety
\[
  \cV(C)=\bigl\{\,E\in\Der(C):E(e_i)=0,\ E^2=E,\ \tr E=\tfrac12\dim C\,\bigr\}
\]
is nonempty.
\end{lemma}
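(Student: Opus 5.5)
The plan is to pass between gradings and derivations through the Euler (degree) derivation. Given a $\ZZ$-grading $C=C_0\oplus C_1$ with $C_1^2=0$, define $E$ to be the projection onto $C_1$, i.e.\ $E(c)=\deg(c)\,c$ on homogeneous elements. Since degrees add, $E(xy)=(\deg x+\deg y)xy=E(x)y+xE(y)$, so $E$ is a derivation. It is idempotent because the support is $\{0,1\}$, and its trace is $\dim C_1=\tfrac12\dim C$. The one point needing care is $E(e_i)=0$. For this, I first arrange that the idempotents $e_i$ are homogeneous of degree $0$. Idempotents of $C$ lift from, and are conjugate to, a complete set of primitive orthogonal idempotents of $C_0$: since $C_1$ is a nilpotent ideal and $C/C_1\cong C_0$, a complete set in $C_0$ is still complete and primitive in $C$ (there are $n$ of them, as $C$ basic and $C_1\subseteq\rad C$). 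By conjugacy of complete sets of primitive orthogonal idempotents in a basic algebra, some inner automorphism carries them to $e_1,\dots,e_n$. Transporting the grading along that automorphism yields a grading of the same shape in which each $e_i$ has degree $0$, so $E\in\cV(C)$.

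For the converse, take $E\in\cV(C)$. An idempotent linear map is diagonalizable with eigenvalues $0,1$, so $C=C_0\oplus C_1$ with $C_0=\ker E$ and $C_1=\operatorname{im}E$. Hence $E$ acts as $\deg$ on this decomposition. For eigenvectors $x,y$ with eigenvalues $\alpha,\beta\in\{0,1\}$ the Leibniz rule gives $E(xy)=(\alpha+\beta)xy$, so $xy$ is either zero or an eigenvector with eigenvalue $\alpha+\beta$. Because characteristic is $0$, the value $2$ is distinct from $0$ and $1$, so $\alpha=\beta=1$ forces $xy=0$. Thus $C_0C_0\subseteq C_0$, $C_0C_1+C_1C_0\subseteq C_1$ and $C_1C_1=0$, which is a $\ZZ$-grading with support $\{0,1\}$. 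The trace condition gives $\dim C_1=\tr E=\tfrac12\dim C$. Also $e_i\in C_0$, and $1\in C_0$ since derivations kill $1$.

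Finally I identify this with the balanced square-zero extension condition. Here $C_0$ is a subalgebra isomorphic to $C/C_1$ and $\dim C_1=\dim C_0$, so $C$ is a balanced square-zero extension of $\Lambda:=C_0$. The main obstacle is the normalization $E(e_i)=0$ in the forward direction, that is, showing that the grading can be conjugated so that the fixed idempotents are homogeneous of degree $0$. I would handle it by the idempotent-lifting and conjugacy argument above, noting that conjugating by $u\in C^\times$ preserves both the trace and the idempotency of $E$. The characteristic-zero hypothesis is used only to separate the eigenvalue $2$ from $\{0,1\}$ (and to read $\tfrac12\dim C$ as a scalar consistently).
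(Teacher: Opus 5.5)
Your proposal is correct and follows essentially the same route as the paper. The converse uses the eigenspace decomposition of $E$, with the eigenvalue $2$ excluded, and the forward direction takes the degree projection after conjugating a complete set of primitive idempotents of $C_0$ onto the given $e_i$ by a unit. The one difference is that the paper builds the conjugating unit $u=\sum_i x_i$ explicitly from isomorphisms $Ce_i\cong Cf_i$, whereas you cite conjugacy of complete sets of primitive orthogonal idempotents as a standard fact.
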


\begin{proof}
Given $E\in\cV(C)$, its eigenspaces grade $C$: the Leibniz rule and $\operatorname{spec}E\subseteq\{0,1\}$
make the decomposition multiplicative, and a product of two degree-$1$ elements would have eigenvalue $2$ and
hence vanish; the trace condition puts half the dimension in degree $1$. Conversely, a grading with support
$\{0,1\}$ and $\dim C_1=\tfrac12\dim C$ has $1\in C_0$, and $C_1$ is a two-sided ideal with $C_1^2=0$, hence a
nilpotent ideal, so $C_1\subseteq\rad C$ and $C/\rad C\cong C_0/\rad C_0$; a complete set of primitive
orthogonal idempotents $f_1,\dots,f_n$ of $C_0$ is then also a complete set of primitive orthogonal
idempotents of $C$. After a permutation the projectives $Ce_i$ and $Cf_i$ are pairwise isomorphic. Choose
$x_i\in e_iCf_i$ and $y_i\in f_iCe_i$ with $x_iy_i=e_i$ and $y_ix_i=f_i$. Then $u=\sum_i x_i$ is a unit with
inverse $\sum_i y_i$, and $uf_iu^{-1}=e_i$. Conjugating the grading by $u$ therefore makes the given $e_i$
homogeneous of degree $0$; the projection $E$ onto the degree-$1$ part is then a derivation with $E(e_i)=0$,
$E^2=E$ and $\tr E=\dim C_1=\tfrac12\dim C$, so $E\in\cV(C)$.
\end{proof}

\begin{lemma}\label{lem:census}
Let $C$ be basic, connected and finite-dimensional over $k$, with primitive idempotents $e_v$ and no parallel
arrows, and let $G=\Aut_e(C)$. Fix $t\ge 1$ and a primitive $t$-th root of unity $\zeta\in k$.
\begin{enumerate}
\item A $\ZZ/t\ZZ$-grading of $C$ with the $e_v$ in degree $0$ is the same as an element $\sigma\in G$ with
$\sigma^t=1$, through $\sigma=\sum_i\zeta^i\pi_i$ with $\pi_i$ the projection onto $C_i$.
\item Every $\sigma\in G$ scales the line $e_{sa}(\rad C/\rad^2 C)e_{ta}$ of each arrow $a$ by a scalar
$\chi_a(\sigma)$, and $\chi=(\chi_a)_a\colon G\to(k^\times)^{Q_1}$ is a homomorphism of algebraic groups with
closed image~\cite[Proposition~2.2.5]{Springer}. Write $\lambda=(\lambda_a)_{a\in Q_1}$ for the coordinates of
the torus $(k^\times)^{Q_1}$. If $B\in\ZZ^{R\times Q_1}$ (for some $R$) is a matrix whose rows generate the
annihilator lattice $\{\,r : \lambda^{r}=1 \text{ on } \chi(G)\,\}$, then $\chi(G)$ is exactly the common zero
set of the Laurent equations $\lambda^{B_r}-1=0$. For $\sigma^t=1$, writing $\chi_a(\sigma)=\zeta^{d_a}$ gives
$Bd\equiv 0\pmod t$. The diagonal inner torus has arrow characters $\lambda_a=q_{sa}q_{ta}^{-1}$, so every row
of $B$ annihilates it and therefore $BP=0$ over $\ZZ$. Thus the following quotient is well defined, and $d$
determines a class $[d]$ in
\[
  H_t(C)=\ker(B\bmod t)/\operatorname{im}(P\bmod t),
  \qquad\text{where } P\in\ZZ^{Q_1\times Q_0},\ (P\phi)_a=\phi_{sa}-\phi_{ta}.
\]
\item For a vertex potential $\phi\in(\ZZ/t\ZZ)^{Q_0}$, the unit $u=\sum_v\zeta^{\phi_v}e_v$ gives an inner
automorphism $c_u\in G$ which commutes with $\sigma$; the composite $\sigma'=c_u\sigma$ again satisfies
$\sigma'^t=1$ and has degree vector $d'=d+P\phi$, and the smash covers $C\#(k[\ZZ/t\ZZ])^{\ast}$ of the
gradings of $\sigma$ and $\sigma'$ are isomorphic. Moreover, any isomorphism of $\ZZ/t\ZZ$-graded algebras
induces an isomorphism of the smash covers; in particular, for every $\alpha\in G$ the covers of $\sigma$ and
$\alpha\sigma\alpha^{-1}$ are isomorphic, with $\chi(\alpha\sigma\alpha^{-1})=\chi(\sigma)$. The class $[d]\in
H_t(C)$ is invariant under both moves.
\item Suppose the arrow representatives are themselves homogeneous, $\sigma(a)=\zeta^{d_a}a$. Then the smash
cover is the covering quiver algebra of the degree vector $d$: its Gabriel quiver has vertices
$Q_0\times\ZZ/t\ZZ$ and arrows $(sa,i)\to(ta,i+d_a)$~\cite{CibilsMarcos,Green}. It is connected if and only if
the holonomy subgroup $H(d)\le\ZZ/t\ZZ$, generated by the signed sums of arrow degrees along closed walks in
the underlying multigraph of the quiver, equals $\ZZ/t\ZZ$. Moreover $H(d)$ depends only on the class $[d]$,
and $[d]=0$ if and only if $H(d)=0$; so for $t\ge 2$ a connected cover forces $[d]\neq 0$.
\end{enumerate}
\end{lemma}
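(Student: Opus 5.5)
The plan is to prove the four parts in order. Each part is essentially a translation between gradings, diagonalizable automorphisms and arrow characters, with connectivity of the cover reduced to a holonomy computation at the end.

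For (1), given a $\ZZ/t\ZZ$-grading with every $e_v$ in degree $0$, define $\sigma=\sum_i\zeta^i\pi_i$. Multiplicativity of the grading gives $\sigma(xy)=\sigma(x)\sigma(y)$ on homogeneous elements. Also $\sigma(e_v)=e_v$ and $\sigma^t=1$, so $\sigma\in G$. Conversely, if $\sigma\in G$ and $\sigma^t=1$, then $\sigma$ is diagonalizable because $\operatorname{char}k=0$ and $x^t-1$ is separable. Its eigenspaces for $\zeta^i$ grade $C$ multiplicatively, and they contain the $e_v$ in degree $0$. These two constructions are mutually inverse.

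For (2), note that any $\sigma\in G$ fixes the $e_v$ and preserves $\rad C$ and $\rad^2C$. It therefore acts on each space $e_{sa}(\rad C/\rad^2C)e_{ta}$, and since there are no parallel arrows this space is one-dimensional. That gives a scalar $\chi_a(\sigma)$, and $\chi$ is clearly a morphism of algebraic groups, with closed image by the cited result. The image is a closed subgroup of a torus, hence diagonalizable. A closed subgroup of a torus is the intersection of the kernels of the characters vanishing on it, so it is cut out by $\lambda^{B_r}=1$ where the rows $B_r$ generate the annihilator lattice.

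If $\sigma^t=1$, then $\chi_a(\sigma)$ is a $t$-th root of unity, which we write as $\zeta^{d_a}$. The equations $\zeta^{B_rd}=1$ then give $Bd\equiv0\pmod t$. Inner automorphisms by diagonal units $\sum q_ve_v$ lie in $G$ and have characters $q_{sa}q_{ta}^{-1}$, so every row of $B$ kills the character $P\phi$ for all integer $\phi$. Hence $BP=0$, and $H_t(C)$ is well defined.

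For (3), the unit $u$ is diagonal and homogeneous of degree $0$, so $c_u$ preserves each $C_i$ and therefore commutes with $\sigma$. Consequently $(c_u\sigma)^t=c_u^t=c_{u^t}=1$. On arrow lines $c_u$ contributes $\zeta^{\phi_{sa}-\phi_{ta}}$, which gives $d'=d+P\phi$. For the isomorphism of covers, the $\sigma'$-grading is the regrading $\deg' x=\deg x+\phi_{v}-\phi_{w}$ for $x\in e_vCe_w$. The smash cover has vertex set $Q_0\times\ZZ/t\ZZ$, and relabelling vertices $(v,i)\mapsto(v,i+\phi_v)$ gives the isomorphism. Functoriality of the smash product under graded isomorphisms handles the conjugation by $\alpha$, and $\chi(\alpha\sigma\alpha^{-1})=\chi(\sigma)$ because the torus is abelian. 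In both moves $[d]$ changes by an element of $\operatorname{im}P$ or not at all, so the class is invariant.

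For (4), the description of the covering quiver follows from the cited results once arrows are homogeneous: the degree-$i$ part of $e_vCe_w$ becomes $\Hom((v,j),(w,j+i))$. For connectivity, consider a walk in the covering quiver starting at $(v,0)$. It projects to a walk in $Q$, and its endpoint's second coordinate is the signed degree sum along that walk. Since $Q$ is connected, the vertices $(v,i)$ reachable from $(v,0)$ are exactly those with $i\in H(d)$, so the cover is connected if and only if $H(d)=\ZZ/t\ZZ$.

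The step I expect to be the main subtlety is the claim that $H(d)$ depends only on $[d]$, together with the equivalence of $[d]=0$ and $H(d)=0$; it rests on the following homology observation.
\begin{itemize}
\item Signed sums along closed walks pair $d$ with cycles $z$ of the graph. Adding $P\phi$ changes the sum by $\phi$ summed over the boundary of $z$, which is $0$, so $H(d)$ depends only on $[d]$.
\item If $H(d)=0$, then $d$ integrates along a spanning tree to a potential $\phi$ with $d=P\phi$ modulo $t$ (up to the sign convention of $P$), so $[d]=0$.
\item Conversely, $[d]=0$ means $d=P\phi$, and then all cycle sums vanish.
\end{itemize}
Thus for $t\ge2$ connectivity, which requires $H(d)=\ZZ/t\ZZ\neq0$, forces $[d]\neq0$.
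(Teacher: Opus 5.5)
Your proposal is correct and follows the paper's proof essentially step for step. Both arguments use the same ingredients: the eigenspace decomposition of the semisimple finite-order $\sigma$ for (1); arrow characters cutting out a closed subgroup of the torus, with $BP=0$ coming from the diagonal inner torus, for (2); the diagonal-unit gauge move, fibrewise relabelling and functoriality of the smash product for (3); and the closed-walk holonomy count of components together with spanning-tree integration of a potential for (4).
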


\begin{proof}
(1) In characteristic $0$ a finite-order element of the affine algebraic group $G$ is semisimple with
eigenvalues in $\mu_t(k)$, so any $\sigma\in G$ with $\sigma^t=1$ equals $\sum_i\zeta^i\pi_i$ for its
eigenprojections; the eigenspaces grade $C$ (the eigenvalue of a product is the product of the eigenvalues),
with the $e_v$ in degree $0$ because $\sigma$ fixes them. Conversely a grading defines such a $\sigma$.

(2) Since $C$ has no parallel arrows, each $e_{sa}(\rad C/\rad^2 C)e_{ta}$ is a line preserved by every
$\sigma\in G$, so $\chi_a$ is defined and $\chi$ is a morphism of algebraic groups. Its image is a closed
subgroup of the torus $(k^\times)^{Q_1}$~\cite[Proposition~2.2.5]{Springer}, and a closed subgroup of a torus
is the common zero set of the characters vanishing on it, that is, of the Laurent equations $\lambda^r-1=0$
with $r$ in the annihilator lattice; any generating set of rows $B_r$ suffices. If $\sigma^t=1$ then
$\chi(\sigma)=(\zeta^{d_a})_a$ lies in $\chi(G)$, so $\zeta^{B_r d}=1$, that is, $Bd\equiv0\pmod t$. For the
diagonal inner torus, the character associated with the row $r$ is
$\prod_a(q_{sa}q_{ta}^{-1})^{r_a}=q^{B_rP}$; since it is trivial for all $q$, one has $B_rP=0$.

(3) The unit $u$ is a linear combination of the $e_v$, so $c_u\in G$ and $\sigma(u)=u$, whence
$c_u\sigma=\sigma c_u$ and $(c_u\sigma)^t=c_{u^t}\sigma^t=1$ since $u^t=1$. On the block $e_vCe_w$ the inner
automorphism $c_u$ acts as the scalar $\zeta^{\phi_v-\phi_w}$; hence
$\chi(\sigma')=\zeta^{P\phi}\chi(\sigma)$, so $d'=d+P\phi$, and an element of $e_vCe_w$ that is
$\sigma$-homogeneous of degree $m$ is $\sigma'$-homogeneous of degree $m+\phi_v-\phi_w$. Relabelling the fibre
over each vertex $v$ by $i\mapsto i+\phi_v$ therefore identifies the two smash covers. The smash construction
is functorial for graded morphisms, so any isomorphism of graded algebras induces an isomorphism of smash
covers. For $\alpha\in G$, the map $\alpha$ itself is an isomorphism of graded algebras from the
$\sigma$-grading to the $\alpha\sigma\alpha^{-1}$-grading (the graded pieces of the latter are the
$\alpha$-images of those of the former); $\chi$ is unchanged under conjugation because it takes values in an
abelian group.

(4) For a homogeneous-arrow grading, the kernel of the presentation $kQ\twoheadrightarrow C$ is a homogeneous
ideal (it is stable under the semisimple automorphism $\sigma$, whose eigenspaces are the homogeneous
components), so $C$ is the quotient of the path algebra of $Q$ graded by the arrow degrees $d$, and the smash
cover is the covering quiver algebra as stated; this is the classical covering construction of~\cite{Green},
which~\cite{CibilsMarcos} identifies with the smash product. In the covering quiver, $(v,i)$ and $(w,j)$ lie
in the same component exactly when $j-i$ is the degree sum of some walk from $v$ to $w$ in the underlying
multigraph; fixing $v=w$ shows that the component of $(v,0)$ meets the fibre over $v$ in the coset $H(d)$, and
since the underlying multigraph is connected ($C$ is basic connected), the components correspond to the cosets
of $H(d)$ in $\ZZ/t\ZZ$. Thus the cover is connected if and only if $H(d)=\ZZ/t\ZZ$. A gauge move $d\mapsto
d+P\phi$ changes the degree sum of every closed walk by $0$, so $H(d)$ depends only on $[d]$; and $[d]=0$ if
and only if all closed-walk sums vanish (a degree vector with vanishing closed-walk sums is a potential:
integrate along walks from a base vertex), if and only if $H(d)=0$.
\end{proof}

\begin{lemma}\label{lem:diag}
In the setting of Lemma~\textup{\ref{lem:census}}, fix a representative $a\in e_{sa}(\rad C)e_{ta}$ of each
arrow. Let $D\le G$ be the closed subgroup of automorphisms scaling each arrow representative. If
$\chi(D)=\chi(G)$, then every finite-order element of $G$ is conjugate in $G$ to an element of $D$.
Consequently every $\ZZ/t\ZZ$-grading of $C$ with the $e_v$ in degree $0$ is mapped, by some automorphism of
$C$, to a grading whose arrow representatives are homogeneous, with isomorphic smash covers.
\end{lemma}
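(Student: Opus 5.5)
We must prove that a finite-order $\sigma\in G$ is conjugate into $D$, given $\chi(D)=\chi(G)$.

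\textbf{Step 1: reduce to the unipotent kernel.} In characteristic $0$ the element $\sigma$, with $\sigma^t=1$, is semisimple (Lemma~\ref{lem:census}(1)). Since $\chi(D)=\chi(G)$, we may choose $\delta\in D$ with $\chi(\delta)=\chi(\sigma)$. Then $\sigma\delta^{-1}$ lies in $K=\ker\chi$. Every element of $K$ acts trivially on $\rad C/\rad^2C$, because that space is spanned by the arrow lines. It also fixes the $e_v$. Hence it acts trivially on every $\rad^iC/\rad^{i+1}C$, since these are images of tensor powers of the arrow space. So $K$ is unipotent, being contained in the unipotent group of automorphisms inducing the identity on the associated graded algebra.

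\textbf{Step 2: Levi-type decomposition.} Consider $G$ acting through the filtration. The map $\chi$ factors through the restriction $G\to\Aut(\rad C/\rad^2C)$, whose image lies in the torus $T=(k^\times)^{Q_1}$. Thus $G=K\rtimes$(a lift) once we have a reductive complement. I would take the Zariski closure $S$ of $\langle\sigma\rangle$. This is a finite group of semisimple elements, hence linearly reductive in characteristic $0$. Likewise $D$ is diagonalizable, so $D^\circ$ is a torus.

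The cleanest route is the standard theorem, in characteristic $0$ (Mostow, or \cite{Springer} for diagonalizable groups), that in a connected-unipotent-radical setting every diagonalizable subgroup of $K\cdot D$ is conjugate by an element of $K$ into a fixed maximal diagonalizable subgroup containing $D$. Concretely, $\sigma\in K D$ generates a finite abelian group of semisimple elements. Its image in $KD/K\cong D/(D\cap K)$ lifts to $D$, and the two lifts $\langle\sigma\rangle$ and its $D$-lift are complements to $K$ in $K\langle\sigma\rangle$. By vanishing of $H^1(\ZZ/t\ZZ,K)$ for unipotent $K$ in characteristic $0$, these complements are $K$-conjugate. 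That vanishing follows by filtering $K$ by the normal subgroups $K_i$ acting trivially modulo $\rad^{i}$, whose successive quotients are vector groups, i.e.\ $\mathbb{Q}$-vector spaces on which finite-group cohomology vanishes.

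The main obstacle is to ensure that the lift lands in $D$ rather than merely in a torus, and that $D\cap K$ is handled. The issue is that $\delta$ need not have finite order. To address this, I would replace $\delta$ by its $t$-th-root-compatible choice. Since $\chi(\delta)^t=1$, write $\delta^t\in D\cap K$. Now $D\cap K$ consists of automorphisms scaling arrows by $1$, hence fixing all generators, so $D\cap K$ is trivial. Thus $\delta^t=1$, and $\sigma=\kappa\delta$ with $\kappa\in K$ and both $\sigma,\delta$ of order dividing $t$. The $K$-conjugacy of the complements $\langle\sigma\rangle$ and $\langle\delta\rangle$ in $K\rtimes\langle\delta\rangle$, via the $H^1$ vanishing, gives $g\in K$ with $g\sigma g^{-1}=\delta$. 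For this I would check that $K$ is normalized by $\delta$, which holds since $K$ is normal in $G$.

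\textbf{Consequence.} Given a $\ZZ/t\ZZ$-grading, Lemma~\ref{lem:census}(1) gives $\sigma$. Conjugating by $g$ yields $\delta\in D$, for which the arrow representatives are eigenvectors, i.e.\ homogeneous. By Lemma~\ref{lem:census}(3), $g$ is a graded isomorphism, and the smash covers are isomorphic.
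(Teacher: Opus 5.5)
Your proof is correct, but it reaches the conjugacy by a different route from the paper's.

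\textbf{Common start.} Both arguments begin the same way. $\ker\chi$ is unipotent, and $\chi|_D$ is injective (your $D\cap K=1$), so $G=K\rtimes D$. The lift $\delta\in D$ of $\chi(\sigma)$ then automatically has finite order; your observation $\delta^t=1$ is the paper's finiteness of $F_D$.

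\textbf{Where the routes diverge.} The paper places $\langle\sigma\rangle$ and $F_D$ inside $H=N\rtimes F_D$. It checks, via $\exp/\log$, that $N$ is connected and equals $R_u(H)$, and that both subgroups are maximal fully reducible. It then invokes Mostow's conjugacy theorem. You instead view $j\mapsto\sigma^j\delta^{-j}$ as a $1$-cocycle of $\ZZ/t\ZZ$ acting on $K$ by conjugation by $\delta$. You kill it by induction along the $\delta$-stable normal series $K_i=\{\kappa:(\kappa-1)C\subseteq\rad^iC\}$, which reduces everything to $H^1(\ZZ/t\ZZ,V)=0$ for the abelian quotients $V$.

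\textbf{What each buys.} Your argument is essentially Schur--Zassenhaus for a unipotent kernel in characteristic $0$. It is more elementary and self-contained, and like the paper's it produces a conjugating element in $K$. The paper's proof is shorter only because Mostow's theorem is used as a black box.

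\textbf{One step needs more.} Embedding $K_i/K_{i+1}$ into $\Hom_k(C,\rad^iC/\rad^{i+1}C)$ shows only that the quotient is torsion-free. That is not enough: for the sign action, $H^1(\ZZ/2\ZZ,\ZZ)\neq0$. What you need is divisibility, so that the quotient is genuinely a $\mathbb{Q}$-vector space. This follows from $\kappa^{1/n}=\exp(\tfrac1n\log\kappa)\in K_i$, using that $\log\kappa$ is a derivation killing the $e_v$ and mapping $C$ into $\rad^iC$; this is the same $\exp/\log$ the paper uses. Alternatively, cite that in characteristic $0$ a closed subgroup of a vector group is a subspace.

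\textbf{Minor simplifications.} If you define the cocycle on $\ZZ/t\ZZ$ acting through $\delta$, you only need $\sigma^t=\delta^t=1$. So you never have to check separately that $\langle\sigma\rangle\cap K=1$, or that $\sigma$ and $\delta$ have the same order. The remarks in Step~2 about Zariski closures and maximal diagonalizable subgroups play no role and can be dropped.
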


\begin{proof}
$G$ is a closed subgroup of $GL(C)$, hence an affine algebraic group, and $N=\ker\chi$ is unipotent: an
element $\sigma\in N$ fixes each $e_v$ and acts trivially on $\rad C/\rad^2 C$, so
$(\sigma-1)(\rad^iC)\subseteq\rad^{i+1}C$ by induction, using
$(\sigma-1)(xy)=(\sigma-1)(x)\,\sigma(y)+x\,(\sigma-1)(y)$, and $\sigma-1$ is nilpotent on
$C=\bigl(\bigoplus_v ke_v\bigr)\oplus\rad C$. The restriction $\chi|_D$ is injective, since an automorphism
fixing every $e_v$ and every arrow representative is the identity (idempotents and arrows generate $C$). From
$\chi(D)=\chi(G)$ we get $G=ND$ with $N\cap D=1$, that is, $G=N\rtimes D$, with $D\cong\chi(G)$ a
diagonalizable group.

Let $\sigma\in G$ have finite order. For $\nu\in N$ the logarithm $\log\nu=\sum_{i\ge1}(-1)^{i+1}(\nu-1)^i/i$
is defined (a finite sum, as $\nu-1$ is nilpotent) and is a nilpotent derivation of $C$ vanishing on the $e_v$
and mapping $\rad C$ into $\rad^2C$, with $\nu=\exp(\log\nu)$. Hence $N$ has no nontrivial element of finite
order ($\nu^m=1$ gives $m\log\nu=\log(\nu^m)=0$, so $\nu=1$), and $\chi$ is injective on
$\langle\sigma\rangle$. Define
\[
 F_D=(\chi|_D)^{-1}\bigl(\chi(\langle\sigma\rangle)\bigr),\qquad
 H=\chi^{-1}\bigl(\chi(\langle\sigma\rangle)\bigr)=N\rtimes F_D.
\]
The group $F_D$ is finite, being isomorphic to $\chi(\langle\sigma\rangle)$ via $\chi|_D$. The unipotent group
$N$ is connected: $s\mapsto\exp(s\log\nu)$ is a morphism from the affine line into $N$ connecting any $\nu\in
N$ to the identity. Since $H/N\cong F_D$ is finite, $N=R_u(H)$. Both $F_D$ and $\langle\sigma\rangle$ are
complements to $N$ in $H$ and are fully reducible by Maschke's theorem. They are maximal fully reducible
subgroups of $H$: if $K$ is fully reducible with $F_D\subseteq K\le H$, then $K\cap N$ is a normal unipotent
subgroup of $K$; it acts trivially on each irreducible summand of the $K$-module $C$, because its fixed
vectors there form a nonzero $K$-stable subspace, so $K\cap N=1$ by faithfulness; hence $K$ embeds in
$H/N\cong F_D$ and $K=F_D$. The same argument applies to $\langle\sigma\rangle$. Mostow's conjugacy theorem
for maximal fully reducible subgroups of algebraic groups in characteristic $0$~\cite[Theorem~7.1]{Mostow}
therefore conjugates $\langle\sigma\rangle$ to $F_D$ by an element of $N$. In particular, $\sigma$ is
conjugate to an element of $D$. The consequence for gradings follows from Lemma~\ref{lem:census}(1) and (3):
if $\alpha\sigma\alpha^{-1}\in D$, then $\alpha$ maps the grading of $\sigma$ to one whose arrow
representatives are homogeneous, and the smash covers are isomorphic.
\end{proof}

\begin{lemma}[Trace criterion]\label{lem:trace}
Let $C$ be a basic finite-dimensional algebra whose quiver has no loops and no parallel arrows. Choose
primitive idempotents $e_v$ and one representative $a\in e_{sa}\rad C e_{ta}$ of each arrow. For
$\delta\in\Der(C)$ with $\delta(e_v)=0$ one has $\delta(a)\in e_{sa}Ce_{ta}\subseteq\rad C$, the inclusion
because $sa\neq ta$; write
\[
 \delta(a)\equiv x_a a\pmod{\rad^2C}.
\]
Assume that every such vector $x=(x_a)$ belongs to $\operatorname{im}(P\otimes_\ZZ k)$, where $P$ is the
vertex-gauge matrix of Lemma~\ref{lem:census}, and that $\dim e_vC=\dim Ce_v$ for every vertex $v$. Then every
derivation annihilating the $e_v$ has trace zero.
\end{lemma}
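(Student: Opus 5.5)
The plan is to modify $\delta$ by an inner derivation so that it becomes nilpotent, since inner derivations have trace zero and nilpotent maps have trace zero.

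First I show that inner derivations have trace zero. For $c\in C$ the map $\operatorname{ad}_c=L_c-R_c$ has trace $\tr L_c-\tr R_c$. Write $c=\sum_v\lambda_ve_v+r$ with $r\in\rad C$. Then $L_r$ and $R_r$ are nilpotent, so they contribute trace zero. The element $e_v$ acts by left multiplication as the projection onto $e_vC$, with trace $\dim e_vC$. By right multiplication it is the projection onto $Ce_v$, with trace $\dim Ce_v$. The hypothesis $\dim e_vC=\dim Ce_v$ therefore gives $\tr\operatorname{ad}_c=0$.

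Next I gauge away the first-order part of $\delta$. By assumption $x=(P\otimes k)\phi$ for some $\phi\in k^{Q_0}$, so $x_a=\phi_{sa}-\phi_{ta}$. Put $u=\sum_v\phi_ve_v$. Then $\operatorname{ad}_u$ kills the $e_v$ and sends $a$ to $\phi_{sa}a-a\phi_{ta}=x_aa$. The derivation $\delta'=\delta-\operatorname{ad}_u$ thus kills every $e_v$ and maps each arrow representative into $\rad^2C$. Since the $e_v$ and the arrows generate $C$ with $\rad C$ generated by the arrows, the Leibniz rule gives $\delta'(\rad^iC)\subseteq\rad^{i+1}C$ by induction. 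Together with $\delta'(e_v)=0$, this shows that $\delta'$ is nilpotent on $C=\bigl(\bigoplus_vke_v\bigr)\oplus\rad C$. Hence $\tr\delta'=0$ and $\tr\delta=\tr\delta'+\tr\operatorname{ad}_u=0$.

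The only subtle point is the induction step. An element of $\rad^iC$ is a sum of products $a_1\cdots a_i$ times elements of $C$, and each Leibniz term contains one factor $\delta'(a_j)\in\rad^2C$, so it lies in $\rad^{i+1}C$. The no-loop and no-parallel-arrow hypotheses are used only to make $x_a$ well defined, since each $e_{sa}(\rad C/\rad^2C)e_{ta}$ is the line spanned by $a$. The loop-free hypothesis gives the inclusion $\delta(a)\in\rad C$ already noted in the statement.
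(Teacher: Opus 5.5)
Your proof is correct and follows the paper's argument. You subtract the inner derivation $\operatorname{ad}_u$ with $u=\sum_v\phi_v e_v$ to get a derivation raising radical degree, hence nilpotent, and you use $\dim e_vC=\dim Ce_v$ to see that $\operatorname{ad}_u$ is traceless. The only cosmetic difference is that you compute $\tr\operatorname{ad}_c=\tr L_c-\tr R_c$ for an arbitrary $c$, whereas the paper reads off $\tr\partial_h=\sum_v\phi_v(\dim e_vC-\dim Ce_v)$ blockwise on the $e_vCe_w$.
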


\begin{proof}
Choose $\phi=(\phi_v)$ with $x=P\phi$ and put $h=\sum_v\phi_ve_v$. The inner derivation $\partial_h(c)=hc-ch$
has the same coefficients $x_a$ on the arrows. Hence $\epsilon=\delta-\partial_h$ annihilates the idempotents
and sends $\rad C$ into $\rad^2C$. Since the radical is generated over $\bigoplus_vke_v$ by the arrow
representatives, the Leibniz rule gives $\epsilon(\rad^iC)\subseteq\rad^{i+1}C$ for every $i$. Thus $\epsilon$
is nilpotent and has trace zero.

On the block $e_vCe_w$, the map $\partial_h$ is scalar multiplication by $\phi_v-\phi_w$. Consequently
\[
 \tr(\partial_h)=\sum_v\phi_v\bigl(\dim e_vC-\dim Ce_v\bigr)=0.
\]
Therefore $\tr\delta=0$.
\end{proof}

\begin{proof}[Proof of Theorem~\ref{thm:nonorbit}]
Suppose $A\cong\rep/\langle\varphi\rangle$ with $\varphi$ admissible; by Lemma~\ref{lem:reduction} we may
assume $\Lambda$ basic and connected. By Lemma~\ref{lem:shift}, after replacing $\varphi$ by $\varphi^{-1}$,
fix $t\ge 1$ with $\varphi^t x=\Nak x$ for every object $x$.

Suppose $t=1$. Then $\varphi$ shifts level by $1$, so Lemma~\ref{lem:grading} makes $A$ a balanced square-zero
extension, with a grading of support $\{0,1\}$ and $\dim A_1=18$, and Lemma~\ref{lem:der} gives
$\cV(A)\neq\emptyset$. Since $A$ is symmetric, $\dim e_vA=\dim Ae_v$ for every vertex $v$. Thus
Proposition~\ref{prop:linear-data}(1) and Lemma~\ref{lem:trace} show that every derivation annihilating the
primitive idempotents has trace zero. Hence $\cV(A)=\emptyset$, a contradiction.

Suppose $t\ge 2$, and set $\Gamma_t:=\rep/\langle\varphi^t\rangle$. We use the quotient-category model
of~\cite[\S2]{CibilsMarcos}: its objects are the $\langle\varphi^t\rangle$-orbits, and its morphism spaces are
the coinvariants of the direct sum of the morphism spaces between their objects. Choosing one representative
of each orbit identifies this model with the orbit-category formula of Section~\ref{sec:prelim}.

The quotient group $\langle\varphi\rangle/\langle\varphi^t\rangle\cong\ZZ/t\ZZ$ acts strictly on $\Gamma_t$.
The action is free on objects: if $\overline{\varphi^i x}=\bar x$ for $0<i<t$, then $\varphi^{i-tj}x=x$ for
some $j$, contrary to admissibility. Its quotient is $A$, so $\Gamma_t$ is a Galois $\ZZ/t\ZZ$-cover of $A$,
connected because $\rep$ is. The inverse construction for free actions gives a $\ZZ/t\ZZ$-grading on the
quotient category, and the smash product of the graded quotient recovers the cover:
by~\cite[Theorem~3.8]{CibilsMarcos} there is an equivalence of $k$-categories with finitely many objects,
hence an isomorphism of the basic algebras associated with the two sides
(cf.~\cite[Proposition~3.7]{CibilsMarcos}, which records the passage from categorical to algebra smash
products). Both algebras here are basic: the projectives of $\Gamma_t$ are indexed by the $\varphi^t$-orbits
of objects, and the smash product is basic because $\rad A$ is a graded ideal with $(A/\rad
A)\#(k[\ZZ/t\ZZ])^{\ast}\cong k^{Q_0\times\ZZ/t\ZZ}$. This gives an isomorphism
\[
  \Gamma_t\cong A\#(k[\ZZ/t\ZZ])^{\ast}.
\]
The identity morphisms of the quotient category are homogeneous of degree $0$. The unit-conjugation argument
in the second half of the proof of Lemma~\ref{lem:der} moves the idempotents of the quotient category to the
standard $e_v$; conjugation by a unit transports the grading and is an isomorphism of graded algebras, so the
smash cover is preserved. Lemma~\ref{lem:census}(1) now gives $\sigma\in\Aut_e(A)$ with $\sigma^t=1$ whose
grading has smash cover $\Gamma_t$.

The image of the arrow-character map $\chi$ on $\Aut_e(A)$ is, by Proposition~\ref{prop:linear-data}(1),
\begin{equation}\label{eq:chargroup}
  \chi(\Aut_e(A))=\{(\chi_b,\chi_y,\chi_d,\chi_n)\in(k^\times)^4 : (\chi_b\chi_y)^2=(\chi_d\chi_n)^2=1\}.
\end{equation}
Thus the annihilator lattice of the group~\eqref{eq:chargroup} is generated by the rows $2([b]+[y])$ and
$2([d]+[n])$ of the character-relation matrix $B_A$, and the hypothesis $\chi(D)=\chi(\Aut_e(A))$ of
Lemma~\ref{lem:diag} holds; so after an automorphism of $A$, which replaces the cover by an isomorphic one, we
may assume $\sigma(a)=\zeta^{d_a}a$ on the arrows.

With the vertex-gauge matrix $P$, Smith normal form gives
\[
  H_t(A)\cong\begin{cases}0, & t\ \text{odd},\\[2pt](\ZZ/2\ZZ)^2, & t\ \text{even}.\end{cases}
\]
Concretely, the underlying multigraph of the quiver has first Betti number $2$, its closed walks being
generated by the cycles $by$ and $dn$, so the holonomy subgroup $H(d)$ of Lemma~\ref{lem:census}(4) is
generated by $h_1=d_b+d_y$ and $h_2=d_d+d_n$, and the constraint $B_Ad\equiv0\pmod t$ reads $2h_1\equiv
2h_2\equiv0\pmod t$. For odd $t$ this forces $h_1=h_2=0$, so $H(d)=0$; for even $t$ it forces
$h_1,h_2\in\{0,t/2\}$, so $H(d)\le\{0,t/2\}$, a proper subgroup of $\ZZ/t\ZZ$ whenever $t>2$. By
Lemma~\ref{lem:census}(4), connectedness of $\Gamma_t$ therefore forces $t=2$ and $(h_1,h_2)\neq(0,0)$.

At $t=2$, gauging $d_y=d_n=0$ by a vertex potential (Lemma~\ref{lem:census}(3)) leaves
$(d_b,d_d)=(h_1,h_2)\in\{(0,1),(1,0),(1,1)\}$: the connected cover $\Gamma_2$ is isomorphic to the smash
product $A\#(k[\ZZ/2\ZZ])^{\ast}$ of one of these three arrow-degree vectors, each of dimension $72$; it
suffices to exclude all three. Since $\varphi^2=\Nak$ on objects, $\varphi^2$ shifts level by $1$, so
Lemmas~\ref{lem:grading} and~\ref{lem:der} applied to $\Gamma_2=\rep/\langle\varphi^2\rangle$ give
$\cV(\Gamma_2)\neq\emptyset$ with $\tr E=36$. Proposition~\ref{prop:linear-data}(3) and Lemma~\ref{lem:trace}
show that every idempotent-annihilating derivation of each of the three candidates has trace zero, so all
three have $\cV=\emptyset$. This excludes $t\ge 2$, so no presentation $A\cong\rep/\langle\varphi\rangle$
exists, and Theorem~\ref{thm:main} follows with Proposition~\ref{prop:RF}.
\end{proof}

\begin{remark}\label{rem:repinf}
$A$ is representation-infinite. If it were representation-finite then, being self-injective, basic and
connected over an algebraically closed field of characteristic $0$, it would be an orbit algebra
$\rep/\langle\varphi\rangle$ of a tilted algebra $\Lambda$ of Dynkin type, by the classification cited in the
introduction (see~\cite[Theorem~1.1]{DI}), contradicting Theorem~\ref{thm:nonorbit}. The same argument applies
to the algebra $S$ of Section~\ref{sec:sph} once Theorem~\ref{thm:sph} is proved.
\end{remark}

\section{A second counterexample: the spherical weighted surface algebra}\label{sec:sph}

Let $S$ be Erdmann's $3$-spherical weighted surface algebra~\cite[\S3.4]{Erdmann}: the member $n=3$ of her
$n$-spherical family, with all multiplicities and scalar parameters equal to $1$, a general weighted surface
algebra in the corrected sense of~\cite{ErdmannSkowronskiGV,ErdmannSkowronskiCorr}. Its Gabriel quiver has
nine vertices $a_i,b_i,d_i$ ($i\in\ZZ/3\ZZ$) and twelve arrows; we use representatives $1,2,3$ and read every
subscript cyclically. The arrows are
\[
  g_i\colon a_i\to b_i,\quad s_i\colon b_i\to a_{i+1},\quad r_i\colon a_{i+1}\to d_i,\quad q_i\colon d_i\to a_i
  \qquad(i\in\ZZ/3\ZZ),
\]
and there are no parallel arrows. Its defining relations, recorded in Appendix~\ref{app:sph} together with
their provenance from the weighted-surface presentation of the triangulated sphere, are abbreviated using the
length-two paths $x_i:=s_ir_i$ and $h_i:=q_ig_i$. The algebra $S$ is not one of the singular algebras
of~\cite{ErdmannSkowronskiGV}: the singular spherical algebra is the two-triangle member $S(1)$, on six
vertices, and the singular disc, triangle and tetrahedral algebras likewise have Gabriel quivers with fewer
than nine vertices. The indecomposable projective $S$-modules therefore have dimensions $12,12,12,8,8,8,8,8,8$
by \cite[Corollary~4.12]{ErdmannSkowronskiGV}, so $\dim S=84$, and $S$ is symmetric and periodic of period $4$
by Theorems~1.1 and~1.3 of~\cite{ErdmannSkowronskiGV}.

\begin{proposition}\label{prop:sph-rf}
Let $M_0$ be Erdmann's cluster-tilting candidate for $S$~\cite[Definition~4.1]{Erdmann}: the direct sum of $S$
with the three simples $S_{a_i}$ and the six modules $\Omega^2S_{b_i}$, $\Omega^2S_{d_i}$ ($i\in\ZZ/3\ZZ$),
which by~\cite[Lemma~4.2]{Erdmann} are uniserial with composition series
\[
  \Omega^2S_{d_{i-1}}=U(a_i,b_i,a_{i+1},b_{i+1},a_{i+2}),\qquad
  \Omega^2S_{b_i}=U(a_i,d_{i-1},a_{i-1},d_{i-2},a_{i-2}).
\]
Writing $P_v=e_vS$, put
\[
 B_1=P_{a_1}/(r_3S+g_1s_1g_2S),\qquad D_1=P_{a_1}/(g_1S+r_3q_3r_2S),
\]
uniserial with series $U(a_1,b_1,a_2)$ and $U(a_1,d_3,a_3)$. Then $N=M_0\oplus B_1\oplus D_1$ is
$3$-cluster-tilting, so $S$ is $3$-representation-finite.
\end{proposition}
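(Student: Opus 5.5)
The plan follows the proof of Proposition~\ref{prop:RF}, with Erdmann's results applied to the $n$-spherical family itself, for which they were originally proved.

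\emph{Step 1: Erdmann's framework.} Since $S$ is symmetric and periodic of period $4$ by \cite[Theorems~1.1 and~1.3]{ErdmannSkowronskiGV}, the Ext-symmetry \eqref{eq:extsym} of \cite[Corollary~2.1]{Erdmann} holds. It identifies the left and right $2$-orthogonals, so it suffices to treat one of them. By \cite[Proposition~4.3]{Erdmann}, $M_0$ is $2$-rigid. By \cite[Proposition~5.4]{Erdmann}, with the correction of Remark~\ref{rem:socle}, every indecomposable nonprojective $X$ in the $2$-orthogonal of $M_0$ is a uniserial subquotient of some $\Omega^2S_{b_i}$ or $\Omega^2S_{d_{i}}$ with top and socle $S_{a_j}$. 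Conversely, \cite[Lemma~5.5]{Erdmann}, together with the rotation $i\mapsto i+1$ (an automorphism of $S$ by the cyclic symmetry of the relations in Appendix~\ref{app:sph}), shows that every such subquotient is $2$-orthogonal to $M_0$.

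\emph{Step 2: census of candidates.} In each five-term arm the factors $S_{a_j}$ sit in radical layers $0,2,4$. Apart from the arm itself and the simple module, the only candidates are therefore the length-three intervals $L[0,2]$ and $L[2,4]$. The next task is to identify these intervals as cyclic quotients $P_{a_j}/K$, checking annihilator dimensions against the normal forms of Appendix~\ref{app:sph} exactly as was done for $A$. For example, $L[0,2]$ of $\Omega^2S_{d_3}$ should be $B_1$, and $L[0,2]$ of $\Omega^2S_{b_1}$ should be $D_1$. I expect the twelve intervals to collapse into six isomorphism classes: $B_i=U(a_i,b_i,a_{i+1})$ and $D_i=U(a_i,d_{i-1},a_{i-1})$, together with their rotations. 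The subtle point is that the tail $L[2,4]$ of one arm must be matched with the head of another, since uniserials with equal series need not be isomorphic. I plan to settle this by writing down the annihilators and using at most two products each.

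\emph{Step 3: rigidity and exclusion.} The rotation shows that $B_1$ and $D_1$ are rigid: $\Omega B_1=r_3S+g_1s_1g_2S$ is generated by elements ending at $d_3$ and $a_2$ in the first case. Any nonzero map $\Omega B_1\to B_1$ would have to hit the layers at $a_2$. I plan to rule this out by a direct check, or by showing that it factors through $P_{a_1}$, which gives $\Ext^1(B_1,B_1)=0$. The same check handles $\Ext^1(D_1,D_1)$, $\Ext^1(B_1,D_1)$ and $\Ext^1(D_1,B_1)$, where the relevant tops lie at different vertices. Ext-symmetry then kills $\Ext^2$, so $N$ is $2$-rigid.

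For maximality, apply the overlap sequence \eqref{eq:overlap} to each five-term arm with $(r,s)=(3,3)$. This gives non-split sequences $0\to L[2,4]\to S_{a}\oplus L\to L[0,2]\to0$. Each sequence yields a nonzero $\Ext^1$ between $B_1$ or $D_1$ and one of the four remaining candidates, $B_2,B_3,D_2,D_3$. Ext-symmetry converts each of these into a nonzero $\Ext^2$ in the other direction, so every excluded candidate is excluded from both orthogonals. Hence both $2$-orthogonals of $N$ equal $\add N$.

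\emph{Main obstacle.} The hardest step is the combinatorial bookkeeping in Steps 2 and 3. The six overlap sequences must actually separate $B_2,B_3,D_2,D_3$ from $\{B_1,D_1\}$, and $B_1$ and $D_1$ must be mutually compatible, which is not implied by the symmetry alone. The first thing I would do is tabulate, for each arm, which classes occur as its head and tail. If some candidate is not reached by any overlap sequence, I would construct an extension by hand via $\Hom(\Omega B_1,-)$ computed from the explicit generators $r_3$ and $g_1s_1g_2$.
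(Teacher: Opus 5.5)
Your plan is the paper's proof step for step. It uses Erdmann's reduction together with Ext-symmetry. It then runs the annihilator census of the twelve length-three intervals: the arm with head $B_i$ has tail $B_{i+1}$, and the arm with head $D_i$ has tail $D_{i-1}$. Next it reads off the vanishing of the four $\Ext^1$-groups among $B_1,D_1$ from the generators of $\Omega B_1$ and $\Omega D_1$. Finally it excludes $B_2,B_3,D_2,D_3$ by the $(3,3)$ overlap sequences. Several of your local justifications are wrong, however, and need the following repairs.

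\emph{The two families of arms.} The rotation $i\mapsto i+1$ sends $b$-arms to $b$-arms and $d$-arms to $d$-arms. So it cannot carry Lemma~5.5 of \cite{Erdmann} from one family of arms to the other. You need both families, because $B_1$ lies only on $d$-arms and $D_1$ only on $b$-arms. A symmetry that does exchange the families is the involution $a_i\mapsto a_{-i}$, $b_i\mapsto d_{-i-1}$, $d_i\mapsto b_{-i-1}$, $g_i\mapsto r_{-i-1}$, $s_i\mapsto q_{-i-1}$, $r_i\mapsto g_{-i-1}$, $q_i\mapsto s_{-i-1}$. One checks that it permutes the relations of Appendix~\ref{app:sph}.

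\emph{Self-extensions of $B_1$ and $D_1$.} The path $g_1s_1g_2$ ends at $b_2$, not $a_2$. Since $B_1$ has no composition factor at $d_3$ or $b_2$, $\Hom_S(\Omega B_1,B_1)=0$ outright. Likewise $\Hom_S(\Omega D_1,D_1)=0$, since $g_1$ and $r_3q_3r_2$ end at $b_1$ and $d_2$. The rotation plays no role here.

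\emph{The mixed groups.} Your reason that ``the relevant tops lie at different vertices'' is false: both modules have top $S_{a_1}$, and $\Hom_S(\Omega B_1,D_1)\neq0$. The correct argument is that any map $\Omega B_1\to D_1$ kills $g_1s_1g_2$, because $D_1e_{b_2}=0$. It sends $r_3$ into the line $D_1e_{d_3}=k\bar r_3$. Hence it is the restriction of a scalar multiple of the projection $P_{a_1}\to D_1$. Symmetrically, a map $\Omega D_1\to B_1$ is determined by the image of $g_1$ in $B_1e_{b_1}=k\bar g_1$, and is likewise a restriction. This short check is exactly the mutual compatibility you flagged as the main obstacle.

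\emph{The overlap sequences.} Only four of the six overlap sequences involve $B_1$ or $D_1$. By the census above they give $\Ext^1_S(B_1,B_2)\neq0$, $\Ext^1_S(B_3,B_1)\neq0$, $\Ext^1_S(D_2,D_1)\neq0$ and $\Ext^1_S(D_1,D_3)\neq0$. Together with Ext-symmetry, this is all that is needed.
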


\begin{proof}
For $i\in\ZZ/3\ZZ$ put
\[
 B_i=P_{a_i}/\Omega B_i,\quad \Omega B_i:=r_{i-1}S+g_is_ig_{i+1}S,
 \qquad
 D_i=P_{a_i}/\Omega D_i,\quad \Omega D_i:=g_iS+r_{i-1}q_{i-1}r_{i-2}S,
\]
extending the two modules of the statement cyclically. All assertions below can be read off from the path
bases~\eqref{eq:S-path-bases}; in particular each displayed right ideal has dimension $9$. The images of
$e_{a_i},g_i,g_is_i$ then form a basis of $B_i$: the only arrows leaving $a_i$ and $b_i$ other than $g_i$ are
$r_{i-1}\in\Omega B_i$ and $s_i$, and the two continuations of $g_is_i$ lie in $\Omega B_i$: $g_is_ig_{i+1}$
by definition, and $g_is_ir_i$ because it belongs to $r_{i-1}S$ by the first binomial relation of
Appendix~\ref{app:sph}. Hence $B_i$ is uniserial with series $U(a_i,b_i,a_{i+1})$; in the same way, using that
$r_{i-1}q_{i-1}g_{i-1}$ lies in $g_iS$ by a cyclic translate of the same relation, $D_i$ is uniserial with
series $U(a_i,d_{i-1},a_{i-1})$.

Since $S$ is symmetric and periodic of period $4$, the Ext-symmetry~\eqref{eq:extsym} again identifies the
left and right $2$-orthogonals, and $M_0$ is $2$-rigid by \cite[Proposition~4.3]{Erdmann}. By Proposition~5.4
of~\cite{Erdmann}, proved there for the $n$-spherical family (with the correction of Remark~\ref{rem:socle}),
every indecomposable nonprojective module in the $2$-orthogonal of $M_0$ is a uniserial subquotient, with top
and socle among the $S_{a_j}$, of one of the six arms $\Omega^2S_{b_i},\Omega^2S_{d_i}$; conversely, every
such subquotient is orthogonal to $M_0$ by Lemma~5.5 of~\cite{Erdmann}, with the same caveat about the two
families of arms as in the proof of Proposition~\ref{prop:RF}. On each arm the factors $S_{a_j}$ occupy the
radical layers $0,2,4$, so, exactly as in the proof of Proposition~\ref{prop:RF}, the subquotients in question
are the arm itself and a simple $S_{a_j}$ (both in $\add M_0$) and the twelve intervals $L[0,2]$, $L[2,4]$ of
the six arms, each cyclic with top some $S_{a_j}$ and hence isomorphic to $P_{a_j}/K$ with $K$ the right
annihilator of a generator, of dimension $12-3=9$. The path bases give
\[
 \Omega^2S_{d_{i-1}}\cong P_{a_i}/(r_{i-1}S+g_is_ig_{i+1}s_{i+1}g_{i+2}S),
\]
with basis the images of the prefixes of $g_is_ig_{i+1}s_{i+1}$. Its interval $L[0,2]$ has annihilator the
preimage of $\rad^3$, which is $\Omega B_i$; its interval $L[2,4]$ is generated by the image of $g_is_i$,
whose annihilator contains $r_iS$ (as $g_is_ir_i\in r_{i-1}S$) and $g_{i+1}s_{i+1}g_{i+2}S$, hence equals
$\Omega B_{i+1}$ by dimensions. In the same way $\Omega^2S_{b_i}\cong
P_{a_i}/(g_iS+r_{i-1}q_{i-1}r_{i-2}q_{i-2}r_iS)$, and its intervals $L[0,2]$ and $L[2,4]$ have annihilators
$\Omega D_i$ and $\Omega D_{i-1}$. Thus the twelve intervals realize exactly the six isomorphism classes
$B_i,D_i$ ($i\in\ZZ/3\ZZ$), the only possible new summands.

It remains to check the extensions involving the two chosen modules. The generators $r_3$ and $g_1s_1g_2$ of
$\Omega B_1$ end at the vertices $d_3$ and $b_2$, whereas the generators $g_1$ and $r_3q_3r_2$ of $\Omega D_1$
end at $b_1$ and $d_2$. Hence $\Hom_S(\Omega B_1,B_1)=0=\Hom_S(\Omega D_1,D_1)$, and the presentations $\Omega
B_1\hookrightarrow P_{a_1}\twoheadrightarrow B_1$ and $\Omega D_1\hookrightarrow P_{a_1}\twoheadrightarrow
D_1$ give $\Ext^1_S(B_1,B_1)=\Ext^1_S(D_1,D_1)=0$. A map $\Omega B_1\to D_1$ is determined by the image of
$r_3$ and is the restriction of a map $P_{a_1}\to D_1$; similarly, every map $\Omega D_1\to B_1$ is the
restriction of a map $P_{a_1}\to B_1$, determined by the image of $g_1$. The four first-extension groups among
$B_1,D_1$ therefore vanish. Ext-symmetry gives the same conclusion in degree $2$, so $B_1\oplus D_1$ is
$2$-rigid.

Finally, applying~\eqref{eq:overlap} to each arm with $(r,s)=(3,3)$ and identifying the ends as above gives,
for every $i\in\ZZ/3\ZZ$, non-split exact sequences
\[
 0\longrightarrow B_{i+1}\longrightarrow S_{a_{i+1}}\oplus \Omega^2S_{d_{i-1}}
   \longrightarrow B_i\longrightarrow0,
\qquad
 0\longrightarrow D_{i-1}\longrightarrow S_{a_{i-1}}\oplus \Omega^2S_{b_i}
   \longrightarrow D_i\longrightarrow0.
\]
For $i=1,3$ the first family shows $\Ext^1_S(B_1,B_2)\neq0$ and $\Ext^1_S(B_3,B_1)\neq0$; for $i=2,1$ the
second shows $\Ext^1_S(D_2,D_1)\neq0$ and $\Ext^1_S(D_1,D_3)\neq0$. Directly and through
Ext-symmetry~\eqref{eq:extsym}, these four nonvanishings exclude $B_2,B_3,D_2,D_3$ from both $2$-orthogonals
of $N=M_0\oplus B_1\oplus D_1$. Since $N$ is $2$-rigid ($M_0$ and $B_1\oplus D_1$ are, and $B_1,D_1$ are
orthogonal to $M_0$ by Lemma~5.5 of~\cite{Erdmann}), it follows that every indecomposable module in either
$2$-orthogonal of $N$ is a summand of $N$. Hence $N$ is $3$-cluster-tilting.
\end{proof}

\begin{remark}\label{rem:erd-sph}
The discussion following Corollary~5.6 of~\cite{Erdmann} concludes that ``$M$ cannot be extended to a
$3$-cluster tilting module for the $n$-spherical algebra when $n\ge 3$''. What the argument there establishes
is that adjoining \emph{all} modules satisfying the rigidity condition $(\ast)$ of that paper fails: the
resulting module has self-extensions, for instance
\[
  0\to B_2\to S_{a_2}\oplus \Omega^2S_{d_3}\to B_1\to 0 .
\]
As in Remark~\ref{rem:erd-tri}, however, it suffices to adjoin a self-orthogonal subset of the candidates.
Proposition~\ref{prop:sph-rf} does so, adjoining $B_1$ and $D_1$ while omitting their cyclic translates, and
thereby corrects the quoted conclusion.
\end{remark}

\begin{theorem}\label{thm:sph}
The algebra $S$ is not isomorphic to any orbit algebra $\rep/\langle\varphi\rangle$ with $\Lambda$
finite-dimensional and $\varphi$ admissible. Together with Proposition~\textup{\ref{prop:sph-rf}}, this makes
$S$ a second counterexample to Question~\textup{\ref{q:DI}}.
\end{theorem}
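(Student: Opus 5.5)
I will follow the template of the proof of Theorem~\ref{thm:nonorbit} for $A$. Suppose $S\cong\rep/\langle\varphi\rangle$ with $\varphi$ admissible. By Lemma~\ref{lem:reduction} we may take $\Lambda$ basic and connected. Since $S$ is symmetric and connected, Lemma~\ref{lem:shift} then gives $t\ge1$ with $\varphi^t=\Nak$ on objects.

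\emph{Case $t=1$.} Here Lemmas~\ref{lem:grading} and~\ref{lem:der} give $\cV(S)\neq\emptyset$, so there is an idempotent derivation with trace $42$. The quiver of $S$ has no loops and no parallel arrows, and $\dim e_vS=\dim Se_v$ because $S$ is symmetric. So by Lemma~\ref{lem:trace} it suffices to show one linear fact: for every derivation $\delta$ with $\delta(e_v)=0$, the arrow-coefficient vector $x\in k^{12}$ (with $\delta(a)\equiv x_aa$ mod $\rad^2$) lies in $\operatorname{im}(P\otimes k)$.

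I would prove this directly from the relations in Appendix~\ref{app:sph}. Each relation is a binomial or commutativity-type relation between parallel paths, for instance one expressing $g_is_ir_i$ as a scalar times a path through $r_{i-1}$, or one relating $x_i=s_ir_i$ and $h_i=q_ig_i$ to longer cycles. Apply $\delta$ to such a relation and compare leading terms modulo higher radical powers. Because the two sides are distinct paths of the same length (or of different lengths, in which case the leading term of the shorter side forces the equation), this yields linear equations $\sum_{a\in p}x_a=\sum_{a\in p'}x_a$ for the two sides $p,p'$. These equations are exactly the statement that $x$ annihilates the same lattice as the character group. So the claim reduces to checking that the $k$-span of the resulting equation vectors, together with $\operatorname{im}P$, cuts out precisely $\operatorname{im}(P\otimes k)$. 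That is a rank computation, namely that the relation vectors have rank $12-8=4$, since $P$ has rank $|Q_0|-1=8$ on a connected quiver. This is the step recorded presumably as a linear-data proposition, and I expect it to be the main computational obstacle. It requires knowing that the leading-term equations really are independent enough, including relations whose lower side involves the socle.

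\emph{Case $t\ge2$.} As for $A$, $\Gamma_t=\rep/\langle\varphi^t\rangle$ is a connected Galois $\ZZ/t\ZZ$-cover isomorphic to a smash product $S\#(k[\ZZ/t\ZZ])^*$ for some $\sigma\in\Aut_e(S)$ of order dividing $t$. To kill this case I would compute $\chi(\Aut_e(S))$. From the same leading-term analysis, applied now to scalings $\lambda_a$ rather than derivations, I expect its annihilator lattice to be generated by rows $B_S$ whose integer span, together with the constraint, satisfies $\ker(B_S\bmod t)=\operatorname{im}(P\bmod t)$ for every $t$, so that $H_t(S)=0$. The introduction announces exactly this. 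Concretely, I would check that $\ker B_S=\operatorname{im}P$ over $\ZZ$ and that the Smith normal forms of $B_S$ and $P$ have unit invariant factors, so that no torsion survives mod $t$. The holonomy reformulation is a helpful check: the underlying graph has Betti number $12-9+1=4$. I would take the cycles $g_is_ir_iq_i$-type squares and the triangles of $x_i$, $h_i$ as a basis, and expect the relations to force each of their degree sums to be $0$ in $\ZZ/t$ with coefficient $\pm1$, not $\pm2$ as happened for $A$.

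Then Lemma~\ref{lem:census}(2) gives $[d]=0$. Part~(4) of that lemma needs homogeneous arrows, but it suffices to note that $[d]=0$ and $H(d)$ depends only on $\chi(\sigma)$. Lemma~\ref{lem:diag} provides homogeneity in any case, once we verify that $\chi(D)=\chi(G)$ by exhibiting diagonal scalings realizing each allowed character. It follows that $H(d)=0$, so $\Gamma_t$ is disconnected, a contradiction. Hence only $t=1$ remains, and it is excluded by the trace argument above.
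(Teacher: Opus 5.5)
Your proposal follows the paper's proof step for step: Lemma~\ref{lem:reduction} and Lemma~\ref{lem:shift}, the trace criterion for $t=1$, and for $t\ge2$ the vanishing $H_t(S)=0$ obtained from $\ker_\ZZ B_S=\operatorname{im}_\ZZ P$ together with Smith invariants $1,1,1,1$. The one ingredient you did not isolate is that the path bases make every arrow corner $e_{sa}Se_{ta}$ one-dimensional and spanned by the arrow, so every element of $\Aut_e(S)$ and every idempotent-annihilating derivation is \emph{exactly} diagonal on the arrows; this turns your leading-term comparison into an exact weight comparison (as phrased, that comparison does not control the $\rad^2$-components of $\delta(a)$, because each binomial relation equates a path of length $3$ with one of length $5$, so the shorter side already lies in $\rad^5 S$), and it makes the detour through Lemma~\ref{lem:diag} unnecessary.
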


\begin{proof}
Lemma~\ref{lem:reduction} and Lemma~\ref{lem:shift} give $t\ge 1$ with $\varphi^t=\Nak$ on objects.

For $t=1$, Lemmas~\ref{lem:grading} and~\ref{lem:der} would give $\cV(S)\neq\emptyset$ with $\tr E=42$. But
$S$ is symmetric, so $\dim e_vS=\dim Se_v$ for every vertex $v$, and $\cV(S)$ is empty:
Proposition~\ref{prop:linear-data}(2) and Lemma~\ref{lem:trace} show that every idempotent-annihilating
derivation has trace zero.

For $t\ge 2$, the construction in the proof of Theorem~\ref{thm:nonorbit} makes
$\Gamma_t=\rep/\langle\varphi^t\rangle$ a connected basic Galois $\ZZ/t\ZZ$-cover of $S$. Passing to
associated algebras, \cite[Proposition~3.7 and Theorem~3.8]{CibilsMarcos} and Lemma~\ref{lem:census}(1) yield
an element $\sigma\in\Aut_e(S)$ with $\sigma^t=1$ whose grading has smash cover $\Gamma_t$. By
Proposition~\ref{prop:linear-data}(2), each of the twelve spaces $e_{sa}Se_{ta}$ is one-dimensional and
spanned by the corresponding arrow. Hence every element of $\Aut_e(S)$ scales the arrows. The same proposition
gives a character-relation matrix $B_S$ of rank $4$ and Smith normal form $I_4$ such that, for the
vertex-gauge matrix $P$, one has $B_SP=0$ and
\[
  \ker_{\ZZ}B_S=\operatorname{im}_{\ZZ}P
\]
(both of rank $8$ in $\ZZ^{12}$). Since $\operatorname{coker}B_S$ is torsion-free (Smith normal form $I_4$),
$\ker(B_S\bmod t)=(\ker_{\ZZ}B_S)\bmod t=(\operatorname{im}_{\ZZ}P)\bmod t=\operatorname{im}(P\bmod t)$; hence
$H_t(S)=0$ for all $t\ge 2$. Thus the degree vector of $\sigma$, whose arrows are homogeneous, has class
$[d]=0$, its holonomy subgroup vanishes by Lemma~\ref{lem:census}(4), and the cover is disconnected,
contradicting the connectedness of $\Gamma_t$. So no $t\ge 2$ arises.
\end{proof}

For $S$, the single identity $\ker_\ZZ B_S=\operatorname{im}_\ZZ P$ disposed of every nontrivial connected
cyclic cover at once, and only the emptiness of $\cV(S)$ was left to check; for $A$, three connected double
covers survived at $t=2$ and had to be excluded individually.

\appendix

\section{Normal forms and relation matrices}\label{app:normal}

The following path sets, grouped by source vertex, form a basis of $A$:
\[
\begin{aligned}
\mathcal B_1={}&\{e_1,b,by,bd,byb,bdn,byby,bdny,bdnyb,bdnybd\},\\
\mathcal B_2={}&\{e_2,y,d,yb,dn,yby,ybd,dny,dnd,ybdn,dnyb,dndn,\\
 &\hspace{35mm}ybdny,dnybd,ybdnyb,dnybdn\},\\
\mathcal B_3={}&\{e_3,n,ny,nd,nyb,ndn,nybd,ndnd,nybdn,nybdny\}.
\end{aligned}
\]
To see directly that these paths span, multiply them on the right by the arrows. Apart from concatenations
already displayed, the only nonzero products are
\[
 ybyb=dndn,\quad bdnybdn=byb,\quad ybdnybd=dnd,\quad
 dnybdny=yby,\quad nybdnyb=ndn;
\]
the remaining boundary products reduce to zero as follows:
\[
\begin{gathered}
 byb\,d=bdn\,d=bdnyb\,y=0,\\
 ybdn\,d=dnyb\,y=ybdnyb\,y=dnybdn\,d=0,\\
 nyb\,y=ndn\,y=nybdn\,d=0.
\end{gathered}
\]
Finally $byby,dndn,ndnd$ are the three socle paths and are annihilated by the arrows. These identities follow
at once by substituting the four binomial relations and then using $bybd=ndny=0$. Thus the displayed set
spans. For linear independence, the multiplication rules just displayed define an action of the arrows on the
$36$-dimensional vector space $V$ with basis indexed by the displayed paths; one checks that the six relations
of~\eqref{eq:relations} act as zero, so $V$ is a right $A$-module. Acting on $e_1+e_2+e_3\in V$ sends each
displayed path $p$, one arrow at a time, to the basis vector labelled $p$, so the displayed paths are linearly
independent in $A$. Hence the displayed set is a basis and $\dim A=10+16+10=36$.

The algebra $A$ is symmetric: define $\phi\colon A\to k$ on the displayed basis by $\phi=1$ on the three socle
paths $byby$, $dndn$, $ndnd$ and $\phi=0$ on the remaining basis paths. One checks with the multiplication
rules above that $\phi(pq)=\phi(qp)$ for all basis paths $p$, $q$, and that the bilinear form
$(x,y)\mapsto\phi(xy)$ is nondegenerate. The four \emph{arrow corners} $e_{sa}Ae_{ta}$ have bases
\[
\begin{array}{c@{\quad}l}
b & b,\ byb,\ bdn,\ bdnyb\\
y & y,\ yby,\ dny,\ ybdny\\
d & d,\ ybd,\ dnd,\ dnybd\\
n & n,\ nyb,\ ndn,\ nybdn.
\end{array}
\]

We also record path bases for $S$. For a path $w$, let $\mathsf{Pref}(w)$ denote its nonempty proper initial
subpaths, and put
\[
\begin{aligned}
 F_i&=g_is_ig_{i+1}s_{i+1}g_{i+2}s_{i+2},
 &R_i&=r_{i-1}q_{i-1}r_{i-2}q_{i-2}r_iq_i,\\
 F_i^b&=s_ig_{i+1}s_{i+1}g_{i+2}s_{i+2}g_i,
 &R_i^d&=q_ir_{i-1}q_{i-1}r_{i-2}q_{i-2}r_i.
\end{aligned}
\]
Thus $F_i,R_i$ are closed at $a_i$, while $F_i^b$ and $R_i^d$ are closed at $b_i,d_i$. With $x_i=s_ir_i$ and
$h_i=q_ig_i$, bases of the three projective types are
\begin{equation}\label{eq:S-path-bases}
\begin{aligned}
 \mathcal C_{a_i}&=\{e_{a_i},F_i\}\cup\mathsf{Pref}(F_i)\cup\mathsf{Pref}(R_i),\\
 \mathcal C_{b_i}&=\{e_{b_i},x_i,F_i^b\}\cup\mathsf{Pref}(F_i^b),\\
 \mathcal C_{d_i}&=\{e_{d_i},h_i,R_i^d\}\cup\mathsf{Pref}(R_i^d).
\end{aligned}
\end{equation}
Indeed, the four binomial families in Appendix~\ref{app:sph} replace the first change from one displayed path
to the other; after a second change one meets one of the eight monomial families. They also give $F_i=R_i$,
$F_i^b=x_ih_i$, and $R_i^d=h_ix_i$, and the monomial relations annihilate these closed paths on the right.
Hence the sets in~\eqref{eq:S-path-bases} span. Their cardinalities are respectively $12,8,8$, the dimensions
of the corresponding indecomposable projective modules recorded in Section~\ref{sec:sph}, so they are bases.
Inspecting their endpoints gives
\begin{equation}\label{eq:S-arrow-corners}
 e_{a_i}Se_{b_i}=kg_i,\quad e_{b_i}Se_{a_{i+1}}=ks_i,\quad
 e_{a_{i+1}}Se_{d_i}=kr_i,\quad e_{d_i}Se_{a_i}=kq_i.
\end{equation}

\begin{proposition}\label{prop:linear-data}
Let $P$ denote the vertex-gauge matrix for the quiver under consideration.
\begin{enumerate}
\item For $A$,
\[
 \chi(\Aut_e(A))=
 \{(\lambda_b,\lambda_y,\lambda_d,\lambda_n):(\lambda_b\lambda_y)^2=
 (\lambda_d\lambda_n)^2=1\}.
\]
If $\delta\in\Der(A)$ annihilates the primitive idempotents and $\delta(a)\equiv x_aa\pmod{\rad^2A}$, then
$x\in\operatorname{im}(P\otimes_\ZZ k)$.

\item Order the arrows of $S$ as
\[
 g_1,g_2,g_3,s_1,s_2,s_3,r_1,r_2,r_3,q_1,q_2,q_3.
\]
Every arrow corner $e_{sa}Se_{ta}$ is one-dimensional. The character-relation lattice is generated by the rows
of
\[
B_S=\begin{pmatrix}
 1& 0& 0& 1& 0& 0& 0&-1&-1& 0&-1&-1\\
 0&-1&-1& 0&-1&-1& 1& 0& 0& 1& 0& 0\\
 0& 1& 0& 0& 1& 0&-1& 0&-1&-1& 0&-1\\
-1& 0&-1&-1& 0&-1& 0& 1& 0& 0& 1& 0\\
 0& 0& 1& 0& 0& 1&-1&-1& 0&-1&-1& 0\\
-1&-1& 0&-1&-1& 0& 0& 0& 1& 0& 0& 1
\end{pmatrix}.
\]
It has rank $4$, Smith invariants $1,1,1,1$, and $\ker_\ZZ B_S=\operatorname{im}_\ZZ P$. In particular, the
arrow-coefficient vector of every idempotent-annihilating derivation belongs to
$\operatorname{im}(P\otimes_\ZZ k)$.

\item Let $A_{pq}$ be the double cover with $\deg b=p$, $\deg d=q$ and $\deg y=\deg n=0$, where
$(p,q)\in\{(0,1),(1,0),(1,1)\}$. Order its arrows as
\[
 b_0,b_1,y_0,y_1,d_0,d_1,n_0,n_1,
\]
where the subscript records the source fibre. If $x$ is the arrow-coefficient vector of an
idempotent-annihilating derivation, then $R_{pq}x=0$, where
\[
R_{01}=\begin{pmatrix}
1&0&1&0&0&0&0&0\\0&1&0&1&0&0&0&0\\0&0&0&0&1&1&1&1
\end{pmatrix},\qquad
R_{10}=\begin{pmatrix}
1&1&1&1&0&0&0&0\\0&0&0&0&1&0&1&0\\0&0&0&0&0&1&0&1
\end{pmatrix},
\]
\[
R_{11}=\begin{pmatrix}
1&0&1&0&0&1&1&0\\0&1&0&1&0&-1&-1&0\\0&0&0&0&1&1&1&1
\end{pmatrix}.
\]
For each matrix, $\ker R_{pq}=\operatorname{im}(P\otimes_\ZZ k)$. Moreover, at the two lifts of the three base
vertices one has
\[
\begin{aligned}
 \dim e_{(1,i)}A_{pq}&=\dim A_{pq}e_{(1,i)}=10,\\
 \dim e_{(2,i)}A_{pq}&=\dim A_{pq}e_{(2,i)}=16,\\
 \dim e_{(3,i)}A_{pq}&=\dim A_{pq}e_{(3,i)}=10.
\end{aligned}
\]
\end{enumerate}
\end{proposition}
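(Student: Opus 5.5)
The plan is to verify each of the three parts by explicit linear algebra over the normal forms of Appendix~\ref{app:normal}.

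\textbf{Part (1).} For the character group I would construct automorphisms first. Any diagonal rescaling $b\mapsto\lambda_bb$, $y\mapsto\lambda_yy$, $d\mapsto\lambda_dd$, $n\mapsto\lambda_nn$ preserves the relations~\eqref{eq:relations} exactly when each binomial relation is homogeneous for the rescaling. Taking $byb=bdnybdn$, this means $\lambda_b^2\lambda_y=\lambda_b^2\lambda_y^2\lambda_d^2\lambda_n^2$, i.e.\ $\lambda_y(\lambda_d\lambda_n)^2=1$. This looks too strong, so before trusting the stated group I would recompute all four binomials carefully. The upshot should be that diagonal automorphisms realize exactly the set $(\lambda_b\lambda_y)^2=(\lambda_d\lambda_n)^2=1$, possibly after composing with inner gauge and with nondiagonal corrections such as $b\mapsto b+c\,bdnyb$. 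Conversely, an arbitrary $\sigma\in\Aut_e(A)$ sends $a\mapsto\chi_a a+(\text{higher terms in the arrow corner})$. Applying $\sigma$ to the relations and reading off the lowest-degree coefficient of the socle-adjacent terms (using the corner bases listed in the appendix) should give $(\chi_b\chi_y)^2=(\chi_d\chi_n)^2=1$. The cleanest invariant route is this: $(by)^2$ and $(dn)^2$ span $e_1\soc A$ and $e_2\soc A$, and $ybyb=dndn$ forces compatibility of scalars on $e_2Ae_2$.

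\textbf{Derivation statement.} For $\delta$ annihilating the $e_v$, write $\delta(a)=x_aa+(\text{longer corner terms})$ and apply $\delta$ to each binomial relation. In $byb-bdnybdn$, the left side has leading degree $3$ and the right side lies in $\rad^7$. Comparing the coefficient of $byb$ modulo $\rad^4$ gives $2x_b+x_y=0$ up to corrections coming from the higher corner terms multiplied out. I expect these linear equations, two from the length-$4$ relations and the four binomials, to cut the $4$-dimensional space of $x$ down to $\operatorname{im}(P\otimes k)$, which is $2$-dimensional (rank of $P$ equals $3-1$). This amounts to showing $x_b+x_y=0$ and $x_d+x_n=0$.

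\textbf{Part (2).} I would use~\eqref{eq:S-arrow-corners} directly: each corner is one-dimensional, so automorphisms and derivations are diagonal on arrows. Each binomial relation of Appendix~\ref{app:sph}, say $p=\mu q$ with $p,q$ paths, then imposes $\lambda^{p}=\lambda^{q}$, and its row in $B_S$ is the difference of the arrow-exponent vectors. I would list the six rows that arise, and note that the monomial relations impose nothing. Rank, Smith form, and $\ker_\ZZ B_S=\operatorname{im}_\ZZ P$ are a finite integer computation. A short argument helps here: $B_SP=0$ is clear from the earlier discussion, $P$ has rank $9-1=8$ because the quiver is connected, $\operatorname{rank}B_S=4$ gives kernel rank $8$, and saturation of $\operatorname{im}P$ follows because $\operatorname{coker}P^{T}$ is torsion-free for a connected graph. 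The derivation statement is the same system linearized.

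\textbf{Part (3).} I would build $A_{pq}$ as the covering quiver algebra of Lemma~\ref{lem:census}(4). Its basis is the lifted paths of Appendix~\ref{app:normal}, so the dimensions $10,16,10$ at both lifts are immediate. Symmetry also lifts, because the socle paths have degree $2h_1$ or $2h_2\equiv0$, so they close up. The relations of $A_{pq}$ are the lifts of~\eqref{eq:relations}. Linearizing them at leading order as in Part (1) produces the rows of $R_{pq}$, and the computation $\ker R_{pq}=\operatorname{im}(P\otimes k)$ is then a rank check (rank $3$, against $\operatorname{im}P$ of dimension $5$ in $k^8$).

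\textbf{Main obstacle.} I expect the main obstacle to be the leading-order analysis for $A$ and its covers. The relations are non-homogeneous in path length, and the arrow corners contain several longer paths, so one must check that correction terms cannot cancel the leading coefficients. I would handle this by filtering by $\rad^i$ and working modulo the next radical layer at each relation.
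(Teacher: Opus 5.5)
Your Part~(2) agrees with the paper: the one-dimensional corners make the weight comparison exact, and rank plus saturation gives $\ker_\ZZ B_S=\operatorname{im}_\ZZ P$. Your dimension count in Part~(3) is also fine. The gap is in Part~(1), and Part~(3) inherits it.

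\textbf{The derivation statement.} In $A$ one has $byb=bdnybdn$, so $byb$ is a product of seven arrows and lies in $\rad^7A\setminus\rad^8A$. The two sides of each binomial relation are therefore the \emph{same} nonzero basis element. There is no degree-$3$ leading term to compare modulo $\rad^4A$, and the equation $2x_b+x_y=0$ you extract is false. The inner derivation $c\mapsto hc-ch$ with $h=\sum_v\phi_ve_v$ has $x_b=\phi_1-\phi_2$ and $x_y=\phi_2-\phi_1$, so it violates that equation.

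What works is the weight comparison you already use for $S$, made exact by a computation in the basis. Write $\delta(b)=x_b\,b+B_1\,byb+B_2\,bdn+B_3\,bdnyb$, and similarly for $y$. Then
\[
\delta(byb)=(2x_b+x_y)\,byb+(B_2+Y_2)\,bdnyb ,
\]
while $\delta(bdnybdn)=(2x_b+x_y+2x_d+2x_n)\,byb$, because every correction term lies in $\rad^9A=0$. Comparing gives $x_d+x_n=0$, and the relation $ndn=nybdnyb$ gives $x_b+x_y=0$ in the same way.

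\textbf{The character group.} Your scaling computation miscounts: $bdnybdn$ contains $y$ only once. Hence the diagonal scalings with $(\lambda_b\lambda_y)^2=(\lambda_d\lambda_n)^2=1$ are automorphisms, and no nondiagonal corrections are needed for that inclusion.

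The reverse inclusion is where your plan fails.
\begin{itemize}
\item The invariant route through $ybyb=dndn$ only yields $(\lambda_b\lambda_y)^2=(\lambda_d\lambda_n)^2$. That equation cuts out a three-dimensional group, not the claimed one.
\item Reading off lowest-degree coefficients misses the real obstruction. Expanding $\alpha(byb-bdnybdn)$, the coefficient of $byb$ is
\[
\lambda_b^2\lambda_y\bigl(1-(\lambda_d\lambda_n)^2\bigr)+\lambda_yB_2^2+\lambda_bB_2Y_2 .
\]
It is contaminated by quadratic terms from the nondiagonal components $B_2\,bdn$ and $Y_2\,dny$, via the reductions $(bdn)y(bdn)=byb$ and $b(dny)(bdn)=byb$.
\end{itemize}
These quadratic terms cancel only after substituting $Y_2=-(\lambda_y/\lambda_b)B_2$. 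That substitution comes from the vanishing coefficient $\lambda_b^2Y_2+\lambda_b\lambda_yB_2$ at $bdnyb$, which sits in radical layer $5$, below $byb$ in layer $7$. This substitution is the missing step. A layer-by-layer argument can supply it only with these true layers, not with $byb$ placed in layer $3$.

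\textbf{Part~(3).} The rows of $R_{pq}$ likewise do not come from a leading-order comparison. They come from projecting $\delta$ of the lifted binomial relations onto the lifts of $byb$, $yby$, $ndn$, $dnd$, after checking that the higher corner components drop out of these projections.
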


\begin{proof}
(1) Let $\alpha\in\Aut_e(A)$. The displayed arrow-corner bases allow one to write $\alpha(a)=\lambda_a a+u_a$
with $u_a\in\rad^2A$ and $\lambda_a\ne0$; in those bases,
\[
\begin{aligned}
 u_b&=B_1\,byb+B_2\,bdn+B_3\,bdnyb, & u_y&=Y_1\,yby+Y_2\,dny+Y_3\,ybdny,\\
 u_d&=D_1\,ybd+D_2\,dnd+D_3\,dnybd, & u_n&=N_1\,nyb+N_2\,ndn+N_3\,nybdn.
\end{aligned}
\]
Expand $\alpha(byb-bdnybdn)=0$ in the basis of this appendix using the multiplication rules above: all
coefficients vanish identically except two. The coefficient of $bdnyb$ is
\[
 \lambda_b^2Y_2+\lambda_b\lambda_yB_2,
\]
and the coefficient of $byb$ is
\[
 \lambda_b^2\lambda_y\bigl(1-(\lambda_d\lambda_n)^2\bigr)+\lambda_yB_2^2+\lambda_bB_2Y_2,
\]
whose quadratic terms arise from the components $B_2\,bdn$ and $Y_2\,dny$ through reductions such as
$(bdn)y(bdn)=bdnybdn=byb$. The vanishing of the first coefficient gives $Y_2=-(\lambda_y/\lambda_b)B_2$;
substituting this into the second cancels its quadratic part and leaves $(\lambda_d\lambda_n)^2=1$. Similarly,
$\alpha(ndn-nybdnyb)=0$ has exactly two nonzero coefficients, $\lambda_n^2D_1+\lambda_d\lambda_nN_1$ at
$nybdn$ and $\lambda_d\lambda_n^2\bigl(1-(\lambda_b\lambda_y)^2\bigr)+\lambda_dN_1^2+\lambda_nN_1D_1$ at
$ndn$, giving $(\lambda_b\lambda_y)^2=1$. Conversely, if these equations hold, scaling the arrows preserves
all four binomial relations, while the two monomial relations remain zero; hence it defines an automorphism.
This proves the formula for $\chi(\Aut_e(A))$.

The tangent space of $\Aut_e(A)$ at the identity is the space of derivations annihilating the primitive
idempotents (apply the defining equations over the dual numbers). The linearizations of the two expansions
above, in which the quadratic terms are absent, again have exactly two nonzero coefficients each:
$\delta(byb-bdnybdn)$ has coefficient $-2(x_d+x_n)$ at $byb$ and $B_2+Y_2$ at $bdnyb$, while
$\delta(ndn-nybdnyb)$ has coefficient $-2(x_b+x_y)$ at $ndn$ and $D_1+N_1$ at $nybdn$. Hence
$x_b+x_y=x_d+x_n=0$, using that $2$ is invertible. These are exactly the vanishing conditions for the two
closed-walk sums of the underlying graph, hence $x$ is a vertex potential.

(2) The path bases~\eqref{eq:S-path-bases} give the four corner identities~\eqref{eq:S-arrow-corners}. Thus
every idempotent-fixing automorphism is diagonal on the arrows. Comparing the weights of the two paths in each
binomial relation gives the six displayed rows. Conversely, the equations $\lambda^{B_{S,r}}=1$ make the
weights agree in every binomial relation (the other six row differences are integral combinations of the
displayed rows), and diagonal scaling preserves every monomial relation. Consequently these six rows generate
the character-relation lattice.

If the rows are denoted $\rho_1,\ldots,\rho_6$, their dependencies are generated by
\[
 \rho_4=-\rho_1+\rho_2+\rho_3,\qquad \rho_6=-\rho_1+\rho_2+\rho_5.
\]
The minor of rows $\rho_1,\rho_2,\rho_3,\rho_5$ and the columns indexed by the arrows $g_1,g_2,g_3,r_1$ has
determinant $-1$. Hence $B_S$ has rank $4$ and Smith invariants $1,1,1,1$. Each row annihilates the incidence
matrix $P$. Since the quiver is connected, $\operatorname{im}_\ZZ P$ is a primitive lattice of rank $8$; the
kernel of $B_S$ also has rank $8$, so the inclusion $\operatorname{im}_\ZZ P\subseteq\ker_\ZZ B_S$ is an
equality. Linearization gives the assertion for derivations.

(3) First lift every path in $\mathcal B_1\cup\mathcal B_2\cup\mathcal B_3$ uniquely from each possible source
fibre. Lift the four binomial relations of $A$ to both fibres, apply an idempotent-annihilating derivation,
and project onto the lifts of the four cycles $byb$, $yby$, $ndn$ and $dnd$. As in the linearized expansion in
the proof of~(1), a direct computation with the lifted multiplication rules shows that the higher corner-basis
components of the derivation do not enter these projections, which are therefore linear forms in the $x_{a_i}$
alone. After removing repeated equations, the resulting system is equivalent to $R_{pq}x=0$; for $(p,q)=(0,1)$
it reduces to
\[
 x_{b_0}+x_{y_0}=0,\qquad x_{b_1}+x_{y_1}=0,\qquad
 x_{d_0}+x_{d_1}+x_{n_0}+x_{n_1}=0,
\]
and for $(p,q)=(1,0)$ to
\[
 x_{b_0}+x_{b_1}+x_{y_0}+x_{y_1}=0,\qquad
 x_{d_0}+x_{n_0}=0,\qquad x_{d_1}+x_{n_1}=0;
\]
the three rows displayed for $R_{11}$ are the reduced system for the third cover. The columns $1,2,5$ of
$R_{01}$ and $R_{11}$, and columns $1,5,6$ of $R_{10}$, form an identity minor. Thus each matrix has rank $3$.
Each annihilates the incidence matrix of the connected six-vertex covering quiver. The incidence image has
rank $5$, which equals the nullity of $R_{pq}$; hence the two spaces agree. Finally, every lifted basis path
also has a unique prescribed target fibre. The asserted left and right dimensions therefore follow from the
source and target counts in the basis of $A$.
\end{proof}

\section{Relations of the spherical algebra}\label{app:sph}

Compose arrows left to right; as in Section~\ref{sec:sph}, $x_i=s_ir_i$ and $h_i=q_ig_i$. The defining ideal
of $S$ on its twelve-arrow Gabriel quiver is generated by the $36$ relations obtained from the following $12$
under the cyclic symmetry $i\mapsto i+1$ on the index $i\in\ZZ/3\ZZ$. Four equate a path following one long
cyclic route with the complementary route:
\[
  g_1x_1 = r_3q_3r_2q_2r_1,\qquad r_1h_1 = g_2s_2g_3s_3g_1,
\]
\[
  h_1s_1 = q_1r_3q_3r_2q_2,\qquad x_1q_1 = s_1g_2s_2g_3s_3,
\]
and eight are monomial:
\[
  g_1x_1h_1 = s_1g_2x_2 = r_1h_1x_1 = q_1r_3h_3 = 0,
\]
\[
  x_1q_1r_3 = x_1h_1s_1 = h_1s_1g_2 = h_1x_1q_1 = 0 .
\]
Substituting the abbreviations turns these into words in the twelve arrows $g_i,s_i,r_i,q_i$. These relations
are the weighted-surface presentation of the triangulated sphere with $n=3$ \cite[\S\S3.1 and~3.4]{Erdmann},
in the corrected sense of~\cite{ErdmannSkowronskiCorr}, with all weight and parameter functions equal to $1$,
rewritten on the Gabriel quiver.

\bibliographystyle{amsplain}
\bibliography{references}

\end{document}